\DeclareMathAlphabet{\mathpzc}{OT1}{pzc}{m}{it}
\newtheorem{assumption}[theorem]{Assumption}
\newtheorem{remark}[theorem]{Remark}
\newtheorem{example}[theorem]{Example}
\newcommand{\cA}{{\cal A}}
\newcommand{\cH}{{\cal H}}
\newcommand{\cL}{{\cal L}}
\newcommand{\cP}{{\cal P}}
\newcommand{\cR}{{\cal R}}
\newcommand{\cZ}{{\cal Z}}
\newcommand{\bx}{x}
\newcommand{\ba}{a}
    \newcommand\quotient[2]{
        \mathchoice
            {
                \text{\raise1ex\hbox{$#1$}\Big/\lower1ex\hbox{$#2$}}%
            }
            {
                #1\,/\,#2
            }
            {
                #1\,/\,#2
            }
            {
                #1\,/\,#2
            }
    }
\newcommand{\re}{{\rm e}}
\newcommand{\ri}{{\rm i}}
\newcommand{\beq}{\begin{equation}}
\newcommand{\eeq}{\end{equation}}
\newcommand{\beqs}{\begin{equation*}}
\newcommand{\eeqs}{\end{equation*}}
\newcommand{\bit}{\begin{itemize}}
\newcommand{\eit}{\end{itemize}}
\newcommand{\ben}{\begin{enumerate}}
\newcommand{\een}{\end{enumerate}}
\newcommand{\bal}{\begin{align}}
\newcommand{\eal}{\end{align}}
\newcommand{\bals}{\begin{align*}}
\newcommand{\eals}{\end{align*}}
\newcommand{\bse}{\begin{subequations}}
\newcommand{\ese}{\end{subequations}}
\newcommand{\bpr}{\begin{proposition}}
\newcommand{\epr}{\end{proposition}}
\newcommand{\bre}{\begin{remark}}
\newcommand{\ere}{\end{remark}}
\newcommand{\bpf}{\begin{proof}}
\newcommand{\epf}{\end{proof}}
\newcommand{\ble}{\begin{lemma}}
\newcommand{\ele}{\end{lemma}}
\newcommand{\bco}{\begin{corollary}}
\newcommand{\eco}{\end{corollary}}
\newcommand{\bex}{\begin{example}}
\newcommand{\eex}{\end{example}}
\newcommand{\bth}{\begin{theorem}}
\newcommand{\enth}{\end{theorem}}
\newcommand{\Rea}{\mathbb{R}}
\newcommand{\Com}{\mathbb{C}}
\newcommand{\GR}{{\partial B_R}}
\newcommand{\pdiff}[2]{\frac{\partial #1}{\partial #2}}
\newcommand{\tendi}{\rightarrow \infty}
\def\XXint#1#2#3{{\setbox0=\hbox{$#1{#2#3}{\int}$}
     \vcenter{\hbox{$#2#3$}}\kern-.5\wd0}}
\newcommand*{\N}[1]{\left\|#1\right\|}
\newcommand{\tfa}{\text{ for all }}
\newcommand{\tfor}{\text{ for }}
\newcommand{\tin}{\text{ in }}
\newcommand{\ton}{\text{ on }}
\newcommand{\tas}{\text{ as }}
\newcommand{\tand}{\text{ and }}
\newcommand{\tst}{\text{ such that }}
\newcommand{\vertiii}[1]{{\left\vert\kern-0.25ex\left\vert\kern-0.25ex\left\vert #1
    \right\vert\kern-0.25ex\right\vert\kern-0.25ex\right\vert}}
\newcommand{\DtN}{{\rm DtN}_k}
\definecolor{jwcol}{RGB}{27, 137, 18}  
\definecolor{dalcol}{rgb}{0.8,0,0}
\definecolor{escol}{rgb}{0,0,0.8}
\definecolor{estcol}{rgb}{0,0.5,0}
\definecolor{esnewcol}{rgb}{0,0.5,0}
\newcommand{\vin}{{v_{\rm in}}}
\newcommand{\vout}{{v_{\rm out}}}
\newcommand{\supp}{{\rm supp}}
\newcommand{\Csol}{C_{\rm sol}}
\newcommand{\Ccont}{{C_{\rm cont}}}
\newcommand{\tr}{{\rm tr}}
\newcommand{\Amin}{A_-}
\newcommand{\Amax}{A_+}
\newcommand{\cmin}{c_-}
\newcommand{\e}{\epsilon}
\newcommand{\domaingen}{\Omega}
\newcommand{\Rscat}{R_{\rm scat}}
\newcommand{\RPMLo}{R_{\rm PML, -}}
\newcommand{\RPMLt}{R_{\rm PML, +}}
\newcommand{\Ascatout}{A_{\rm out}}
\newcommand{\cscatout}{c_{\rm out}}
\newcommand{\Ascatin}{A_{\rm in}}
\newcommand{\cscatin}{c_{\rm in}}
\newcommand{\Omegain}{\Omega_{\rm in}}
\newcommand{\Omegaout}{\Omega_{\rm out}}
\newcommand{\uout}{u_{\rm out}}
\newcommand{\uin}{u_{\rm in}}
\newcommand{\loc}{\operatorname{loc}}
\newcommand{\Rtr}{R_{\tr} }
\newcommand{\fdspace}{\cH_h}
\newcommand{\euanspace}{, \,}
\newcommand{\CGo}{C_{\rm G1}}
\newcommand{\CGt}{C_{\rm G2}}
\newcommand{\Cell}{C_{\rm ell,1}}
\newcommand{\Celladj}{C_{\rm ell,2}}
\newcommand{\Coscil}{C_{\rm osc}}
\newcommand{\Ot}{\Omega_{\rm p}}
\newcommand{\OI}{\Omega_-}
\newcommand{\Gt}{\Gamma_{\rm p}}
\newcommand{\GI}{\Gamma_-}
\newcommand{\Zspace}{\cZ}
\newcommand{\settheoremtag}[1]{
  \let\oldthetheorem\thetheorem
  \renewcommand{\thetheorem}{#1}
  \g@addto@macro\endtheorem{
    \addtocounter{theorem}{-1}
    \global\let\thetheorem\oldthetheorem}
  }
\definecolor{jeffColor}{RGB}{102, 0, 204}
\title{
Sharp preasymptotic error bounds for the Helmholtz \lowercase{$h$}-FEM
}
\author{
J.~Galkowski\thanks{Department of Mathematics, University College London, 25 Gordon Street, London, WC1H 0AY, UK,   \tt J.Galkowski@ucl.ac.uk}
\and
E.~A.~Spence\thanks{Department of Mathematical Sciences, University of Bath, Bath, BA2 7AY, UK, \tt E.A.Spence@bath.ac.uk }
}
\date{\today}
\begin{document}
\pagenumbering{arabic}

\maketitle

\begin{abstract}
In the analysis of the $h$-version of the finite-element method (FEM), with fixed polynomial degree $p$, applied to the Helmholtz equation with wavenumber $k\gg 1$, the \emph{asymptotic regime} is when
$(hk)^p \Csol$ is sufficiently small and the sequence of Galerkin solutions are quasioptimal; here $\Csol$ is the 
$L^2\to L^2$ norm of the Helmholtz solution operator, with $\Csol \sim k$ for nontrapping problems.
In the \emph{preasymptotic regime}, one expects that if $(hk)^{2p}\Csol$ is sufficiently small, then 
(for physical data) the relative error of the Galerkin solution is controllably small.

In this paper, we prove the natural error bounds in the preasymptotic regime for the variable-coefficient Helmholtz equation in the exterior of a Dirichlet, or Neumann, or penetrable obstacle (or combinations of these) 
and with the radiation condition \emph{either} realised exactly using the Dirichlet-to-Neumann map on the boundary of a ball \emph{or} approximated either by a radial perfectly-matched layer (PML) or an impedance boundary condition.
Previously, such bounds for $p>1$ were only available for Dirichlet obstacles with the radiation condition approximated by an impedance boundary condition. 
Our result is obtained via a novel generalisation of the ``elliptic-projection'' argument (the argument used to obtain the result for $p=1$) which can be applied to a wide variety of abstract Helmholtz-type problems.
\end{abstract}

\begin{AMS}
35J05, 65N15, 65N30, 78A45
\end{AMS}

\begin{keywords}
Helmholtz, FEM, high order, pollution effect, preasymptotic, perfectly-matched layer, elliptic projection.
\end{keywords}

\section{Introduction}

\subsection{Informal statement of the main result}

We consider the $h$-version of the finite-element method ($h$-FEM), where accuracy is increased by decreasing the meshwidth $h$ while keeping the polynomial degree $p$ constant, applied to the Helmholtz equation.

\begin{theorem}[Informal statement of the main result]\label{thm:informal}
Let $u$ be the solution to the variable-coefficient Helmholtz equation, with wavenumber $k>0$, in the exterior of a Dirichlet, or Neumann, or penetrable obstacle (or combinations of these) and with the radiation condition 
\emph{either} realised exactly using the Dirichlet-to-Neumann map on the boundary of a ball \emph{or} approximated either by a radial perfectly-matched layer (PML) or an impedance boundary condition. Let $\Csol$ be the $L^2\to L^2$ norm of the solution operator, with $\Csol\sim k$ for nontrapping problems.

Under the natural regularity assumptions on the domain and coefficients for using degree $p$ polynomials, if 
\beq\label{eq:threshold}
(hk)^{2p}\Csol
\text{ is sufficiently small} 
\eeq
then the Galerkin solution $u_h$ exists, is unique, and satisfies
\begin{gather}\label{eq:H1bound}
\N{u-u_h}_{H^1_k(\Omega)}\leq C \Big(1 + (hk)^p \Csol\Big) \min_{v_h \in \fdspace} \N{u-v_h}_{H^1_k(\Omega)},\\
\label{eq:L2bound}
\N{u-u_h}_{L^2(\Omega)}\leq C \Big( hk + (hk)^p \Csol\Big) \min_{v_h \in \fdspace} \N{u-v_h}_{H^1_k(\Omega)}.
\end{gather}
Furthermore, if the data is $k$-oscillatory and sufficiently regular (in a sense made precise below), then
\beq\label{eq:rel_error}
\frac{\N{u-u_h}_{H^1_k(\Omega)}}
{
\N{u}_{H^1_k(\Omega)}
}
\leq C \Big(1  + (hk)^p \Csol\Big)(hk)^p;
\eeq
i.e., the relative $H^1_k$ error can be made controllably small by making $(hk)^{2p}\Csol$ sufficiently small.
\end{theorem}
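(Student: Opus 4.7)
The plan is to adapt the classical Schatz / elliptic-projection duality argument --- which yields the preasymptotic bound for $p=1$ under the threshold $(hk)^2\Csol$ small --- to general polynomial degree. The guiding principle is that the discrete part of the error lies in $\fdspace$ and is therefore effectively bandlimited at frequency $1/h$, so one should be able to trade $p$ factors of $h$ against $p$ derivatives of the dual Helmholtz solution, whose regularity is precisely what $\Csol$ quantifies.

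Step~1 (reference projection). I would introduce a $k$-independent, coercive sesquilinear form $a_{\rm ell}$ built from the principal part of the Helmholtz sesquilinear form, and let $P_h : H^1 \to \fdspace$ be its Galerkin projection. Standard $k$-free C\'ea and Aubin--Nitsche theory then supply
\begin{equation*}
\N{u - P_h u}_{H^1_k} \lesssim (1+hk)\min_{v_h\in\fdspace}\N{u-v_h}_{H^1_k}, \qquad \N{u - P_h u}_{L^2}\lesssim h\,\N{u-P_h u}_{H^1}.
\end{equation*}
Splitting $u - u_h = (u - P_h u) + (P_h u - u_h)$ concentrates the Helmholtz-specific difficulty in the discrete function $e_h := P_h u - u_h \in \fdspace$, which by Galerkin orthogonality for the Helmholtz problem (combined with the definition of $P_h$) satisfies a defect equation of the form $a(e_h, v_h) = F(u - P_h u,\, v_h)$ for all $v_h \in \fdspace$, the right-hand side involving only mass-type and boundary-type pairings of the residual $u - P_h u$.

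Step~2 (duality; the heart of the argument). Let $w$ solve the adjoint Helmholtz problem with right-hand side chosen to extract $\N{e_h}_{L^2}$. Testing the defect equation against $P_h w$ and using both Galerkin orthogonalities expresses $\N{e_h}_{L^2}^2$ as a pairing of $u - P_h u$ with $w - P_h w$. For $p=1$ the standard Aubin--Nitsche estimate $\N{w}_{H^2}\lesssim \Csol\N{e_h}_{L^2}$ gives only a $(hk)\Csol$ gain, which is why existing higher-$p$ preasymptotic results required extra structure. For $p>1$ I would instead pair the residual with $w - P_h w$ in a negative-order Sobolev norm and invoke the higher adjoint regularity $\N{w}_{H^{p+1}}\lesssim \Csol\N{e_h}_{L^2}$ afforded by $\Csol$ under the paper's assumptions, obtaining an estimate of the form
\begin{equation*}
\N{e_h}_{L^2} \lesssim \bigl(hk + (hk)^p\Csol\bigr)\min_{v_h\in\fdspace}\N{u-v_h}_{H^1_k}.
\end{equation*}
Combining with Step~1 and a G{\aa}rding estimate on the defect equation with test function $v_h = e_h$, the resulting $k^2\N{e_h}_{L^2}^2$ cross-term can be absorbed on the left precisely when the threshold \eqref{eq:threshold} holds, yielding \eqref{eq:H1bound} and \eqref{eq:L2bound}. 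Finally, \eqref{eq:rel_error} is immediate from the interpolation estimate $\min_{v_h\in\fdspace}\N{u-v_h}_{H^1_k}\lesssim (hk)^p \N{u}_{H^1_k}$ available for data microlocalised at frequency $k$.

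The main technical obstacle is Step~2 in the generality required by the theorem: the naive Aubin--Nitsche gain saturates at one power of $hk$, and recovering the full $(hk)^p$ demands genuine negative-norm control of the residual $u - P_h u$ paired with a decomposition of the adjoint solution $w$ that separates its smooth bulk part (bounded by $\Csol$ in $H^{p+1}$) from the rougher contributions arising at the Dirichlet/Neumann/penetrable interfaces and at the PML or impedance truncation. Performing this decomposition once, in an abstract ``Helmholtz-type'' framework that applies simultaneously to every problem class in the theorem --- rather than re-doing it ad hoc for each boundary condition and each geometry --- is the technical heart of the generalisation flagged in the abstract, and is where the greatest care will be required.
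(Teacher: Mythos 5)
There is a genuine gap, and it sits exactly where your sketch is vaguest. First, the regularity claim powering your Step~2 is false: the adjoint solution $w=\cR^*e_h$ has data only in $L^2$, so elliptic regularity gives $w\in H^2$ with $\N{w}_{H^2_k}\lesssim(1+\Csol)\N{e_h}_{L^2}$, but \emph{not} $\N{w}_{H^{p+1}_k}\lesssim\Csol\N{e_h}_{L^2}$; the correct substitute is a splitting of $w$ into finitely many terms of increasing regularity with $k$-independent bounds plus a smooth remainder carrying $\Csol$ (the Chaumont-Frelet--Nicaise/Melenk--Sauter structure, Lemma 2.3 of the paper) --- and these rough-but-bounded pieces are global, not localised at the interfaces or the truncation boundary as you suggest. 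Second, and more fundamentally, even granting that splitting, your scheme only recovers the old threshold $(hk)^{p+1}\Csol$. Writing out your duality step with the elliptic projection $P_h$: with $e_h=P_hu-u_h$ and $w=\cR^*e_h$, the two Galerkin orthogonalities give $\N{e_h}_{L^2}^2=-m(e_h,\,w-P_hw)-m(P_hu-u,\,P_hw)$, where $m$ is the mass(+boundary) part of $a$. Your negative-norm idea helps only with the second term, because $u-P_hu$ does admit negative-norm superconvergence. The first term is the killer: $e_h$ is a raw finite-element function with no smoothness (positive or negative) to exploit, so $|m(e_h,w-P_hw)|\lesssim\N{e_h}_{L^2}\N{w-P_hw}_{L^2}$, and Aubin--Nitsche for the coercive form gains exactly one factor $hk$, giving $hk\bigl(hk+(hk)^p\Csol\bigr)\N{e_h}_{L^2}^2$ --- absorbable only when $(hk)^{p+1}\Csol$ is small, which is the known $p=1$-sharp condition, not \eqref{eq:threshold}.

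The paper's resolution is not a decomposition of $w$ at all (that enters only in bounding $\eta(\cH_h)$, Theorem \ref{thm:eta_abs}, i.e.\ the asymptotic-regime ingredient) but a modification of the projection itself: instead of restoring coercivity by adding a multiplication (mass) term, one adds a self-adjoint \emph{smoothing} operator $S=\psi(\mathcal{P})$, defined by functional calculus of the operator $\mathcal{P}$ associated with $\Re a$, chosen so that $\widetilde a(u,v)=a(u,v)+\langle Su,v\rangle_{\cH_0}$ is coercive while $\N{S}_{L^2\to\cZ_{\ell+1}}\leq C$ (Lemma \ref{lem:abs1}). Then the problematic duality term becomes $\langle u-u_h,\,S^*(I-\widetilde\Pi)\cR^*(u-u_h)\rangle$, and a Schatz-type duality argument for $\widetilde a$, using that $\widetilde\cR SS^*$ maps $L^2$ boundedly into $\cZ_{\ell+1}$, yields $\N{S^*(I-\widetilde\Pi)v}_{L^2}\lesssim(hk)^p\N{(I-\widetilde\Pi)v}_{H^1_k}$, i.e.\ the full $(hk)^p$ gain while measuring $u-u_h$ only in $L^2$. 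Without some device of this kind (or the discrete-norm/error-splitting machinery of Du--Wu and Pembery, which is tied to the impedance/Dirichlet setting), your Step~2 cannot reach the $(hk)^{2p}\Csol$ threshold claimed in the theorem.
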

The norm $\|\cdot\|_{H^1_k(\Omega)}$ in the bounds above is defined by
\beq\label{eq:1knorm}
\N{v}^2_{H^1_k(\domaingen)}:= k^{-2}\N{\nabla v}^2_{L^2(\domaingen)} + \N{v}^2_{L^2(\domaingen)}.
\eeq

\subsection{The context and novelty of the main result}\label{sec:context}

The fact that, for oscillatory data, the relative $H^1_k$ error for the Helmholtz $h$-FEM is controllably small if $(hk)^{2p}\Csol$ is sufficiently small was famously identified for 1-d nontrapping problems by the work of Ihlenburg and Babu\v{s}ka \cite{IhBa:95a, IhBa:97} (see \cite[Page 350, penultimate displayed equation]{IhBa:97}, \cite[Equation 4.7.41]{Ih:98}).  
The bounds \eqref{eq:H1bound} and \eqref{eq:L2bound} have previously been obtained for
\ben
\item $p=1$, 
for Helmholtz problems with either an impedance boundary condition \cite[Theorem 6.1]{Wu:14}, \cite[Theorem 2]{BaChGo:17}, 
or truncation via the exact Dirichlet-to-Neumann map \cite[Theorem 4.1]{LSW2}, or
truncation via a radial, $k$-independent PML \cite[Theorem 4.4]{LiWu:19}, or truncation via a radial, $k$-dependent PML \cite[Theorem 7.2]{ChGaNiTo:22},
\item $p\in \mathbb{Z}^+$, the constant-coefficient Helmholtz equation with no obstacle and an impedance boundary condition approximating the radiation condition \cite[Theorem 5.1]{DuWu:15},
\item $p\in \mathbb{Z}^+$, the variable-coefficient Helmholtz equation in the exterior of a Dirichlet obstacle with an impedance boundary condition approximating the radiation condition \cite[Theorem 2.39]{Pe:20}.
\een
The bounds in Point 1 for $p=1$ come from the so-called \emph{elliptic projection} argument, which proves error bounds under the condition ``$(hk)^{p+1}\Csol$ is sufficiently small''; i.e., the sharp condition when $p=1$, but not when $p>1$. 
The initial ideas behind this argument were introduced in the Helmholtz context  in \cite{FeWu:09, FeWu:11} for interior-penalty discontinuous Galerkin methods, and 
then further developed for the standard FEM and continuous interior-penalty methods in \cite{Wu:14,ZhWu:13}. 

The present paper proves the bounds \eqref{eq:H1bound}, \eqref{eq:L2bound}, and \eqref{eq:rel_error} for the $h$-FEM assuming only that the sesquilinear form is continuous, satisfies a G\aa rding inequality, and satisfies certain standard elliptic-regularity assumptions, therefore covering a variety of scatterers and methods for truncating the exterior domain. 
Regarding the latter:~in this paper we consider truncating with the exact Dirichlet-to-Neumann map on the boundary of a ball, with a radial PML, or with an impedance boundary condition.

Since the preprint of this paper appeared, its ideas have been used in 
\cite{LiWu:23, ChSp:24, ChGaSp:24}, with \cite{LiWu:23} analysing high-order continuous interior-penalty methods for the Helmholtz PML problem (generalising the $p=1$ results in \cite{LiWu:19}), \cite{ChSp:24} 
analysing the geometric error for the Helmholtz $h$-FEM, and \cite{ChGaSp:24} obtaining preasymptotic error bounds for the Maxwell $h$-FEM.

\subsection{Statement of the main result in abstract form}\label{sec:statement}

Let $\mathcal{H}\subset \mathcal{H}_0\subset \mathcal{H}^*$ be Hilbert spaces, with $\cH^*$ the space of anti-linear functionals on $\cH$, 
$\cH_0$ identified with its dual, 
and
 $\mathcal{H}\subset \mathcal{H}_0$ compact. Let  $a:\mathcal{H}\times \mathcal{H}\to \mathbb{C}$ be a 
 sesquilinear form; i.e., $a$ is linear in its first argument, anti-linear in its second argument.
We assume that $a$ is continuous, i.e., 
\begin{equation}
\label{eq:contAb}
|a(u,v)|\leq \Ccont \N{u}_{\mathcal{H}}\N{v}_{\mathcal{H}}
\quad\tfa u,v\in \cH,
\end{equation}
and satisfies the G\aa rding inequality 
\begin{equation}
\label{eq:Garding}
\Re a(v,v)\geq \CGo\N{v}_{\mathcal{H}}^2-\CGt\N{v}^2_{\mathcal{H}_0} \quad \tfa v \in \cH,
\end{equation}
for some $\Ccont, \CGo,\CGt>0$.

\begin{assumption}[Abstract elliptic regularity bounds for $a$]\label{ass:er}
Let $\Zspace_0=\mathcal{H}_0$, $\Zspace_1=\mathcal{H}$, and $\Zspace_j\subset \Zspace_{j-1}$ for $j=2,\dots,\ell+1$ be such that $\Zspace_j$ is dense in $\Zspace_{j-1}$. There exists a $\Cell>0$ such that, for all $u\in \mathcal{H}$, 
\begin{equation}
\label{eq:ellipticRegA}
\N{u}_{\Zspace_j}\leq \Cell\Big(\N{u}_{\mathcal{H}_0}+\sup_{\substack{v\in \mathcal{H}\euanspace \|v\|_{(\Zspace_{j-2})^*}=1}}\big| a(u,v)\big|\,\Big),\quad
j=2,\dots,\ell+1,
\end{equation}
with $u\in \Zspace_j$ if the right-hand side of \eqref{eq:ellipticRegA} is finite.
In addition,
for all $u\in \mathcal{H}$, 
\begin{equation}
\label{eq:ellipticReg}
\|u\|_{\Zspace_j}\leq \Cell\Big(\N{u}_{\mathcal{H}_0}+\sup_{\substack{v\in \mathcal{H}\euanspace \|v\|_{(\Zspace_{j-2})^*}=1}}\big|(\Re a)(u,v)\big|\,\Big),
\quad j=2,\dots,\ell+1,
\end{equation}
with $u\in \Zspace_j$ if the right-hand side of \eqref{eq:ellipticReg} is finite, where the sesquilinear form $\Re a$ is defined by 
\beq\label{eq:reala}
(\Re a)(u,v):=\tfrac{1}{2}\big(\,a(u,v)+\overline{a(v,u)}\,\big).
\eeq
\end{assumption}

Recalling the one-to-one correspondence between sesquilinear forms $a:\cH\times\cH\to \mathbb{C}$ and operators $\cA: \cH\to \cH^*$ given by 
$a(u,v) = \langle \cA u,v\rangle_{\cH^*\times \cH}$ (see, e.g., \cite[Page 42]{Mc:00}), 
we see that $\Re a$ \eqref{eq:reala} is the sesquilinear form corresponding to the operator $\Re \cA: =(\cA + \cA^*)/2$.

\begin{remark}\label{rem:omega}
Note that $\Re a$ in~\eqref{eq:Garding} and~\eqref{eq:reala} could be replaced by 
$\Re (\re^{\ri\omega}a)$, so long as one uses the same value of $\omega$ in both conditions. 
Remark \ref{rem:JeffPML} below describes a situation where this is useful.
\end{remark}

\begin{example}\label{ex:PML}
For the Helmholtz equation outside a Dirichlet obstacle with radial PML truncation and $\Omega$ the truncated exterior domain,
$\cH_0=L^2(\Omega)$, $\cH= H^1_0(\Omega)$, and $\Zspace_j = H^j(\Omega) \cap H^1_0(\Omega)$.
 Assumption \ref{ass:er} is then elliptic regularity for the Helmholtz PML operator and its real part, which both hold if the coefficients of the Helmholtz equation are in $C^{\ell-1,1}$, the PML scaling function is $C^{\ell, 1}$, 
$\partial \Omega$ is $C^{\ell,1}$ 
 and one works with the Sobolev norms where each derivative is scaled by $k^{-1}$ (see Lemma \ref{lem:checkPML} below).
\end{example}

Given $g\in \mathcal{H}^*$, {suppose that $u\in \cH$ satisfies
\begin{equation}
\label{eq:vp_abs}
a(u,v)=\langle g,v\rangle\qquad \text{ for all }v\in\mathcal{H},
\end{equation}
where $\langle\cdot,\cdot\rangle$ is the duality pairing between $\cH^*$ and $\cH$. 
Since $a$ is continuous \eqref{eq:contAb} and satisfies the G\aa rding inequality \eqref{eq:Garding} with $\cH \subset \cH_0$ compact, 
uniqueness of the solution to \eqref{eq:vp_abs} is equivalent to existence; see, e.g., \cite[Theorem 2.32]{Mc:00}.

Given a sequence of finite dimensional subspace $\{\mathcal{H}_h\}_{h>0}$ with $\cH_h \subset \mathcal{H}$, the Galerkin method seeks approximations of $u$, $\{u_h\}_{h>0}$ with $u_h\in\cH_h$, such that
\begin{equation}
\label{eq:Galerkin_abs}
a(u_h,v_h)=\langle g,v_h\rangle\,\,\text{ for all }\,\,v_h\in\mathcal{H}_h.
\end{equation}

\begin{theorem}[Abstract generalisation of the elliptic-projection argument]\label{thm:ep_abs}

Let $a:\mathcal{H}\times \mathcal{H}\to \mathbb{C}$  satisfy~\eqref{eq:contAb},~\eqref{eq:Garding}, and Assumption \ref{ass:er} for some $\ell \in\mathbb{Z}^+$, and suppose that 
\eqref{eq:vp_abs} has a unique solution. 
Define $\mathcal{R}^*:\mathcal{H}^*\to \mathcal{H}$  by 
\beq\label{eq:R*_abs}
a(w,\mathcal{R}^*v)=\langle w,v\rangle\qquad \text{ for all } w\in \mathcal{H},\,v\in\mathcal{H}^*,
\eeq
and
\beq\label{eq:eta_def_abs}
\eta(\cH_h):=\|(I-\Pi_h)\mathcal{R}^*\|_{\cH_0 \to \cH},
\eeq
where $\Pi_h:\mathcal{H}\to \mathcal{H}_h$ is the orthogonal projection. Then there exist $C_1, C_2, C_3>0$
such that 
 if $h$ satisfies
\beq\label{eq:threshold_abs}
\eta(\cH_h)\|I-\Pi_h\|_{\Zspace_{\ell+1}\to \mathcal{H}} \leq C_1,
\eeq
then the solution $u_h$ to~\eqref{eq:Galerkin_abs} exists, is unique, and satisfies
\begin{align}\label{eq:H1bound_abs}
\N{u-u_h}_{\mathcal{H}} &\leq C_2\big(1+\eta(\cH_h)\big) \N{(I-\Pi_h)u}_{\cH},\\
\label{eq:L2bound_abs}
\N{u-u_h}_{\mathcal{H}_0} &\leq C_3\, \eta(\cH_h) \N{(I-\Pi_h)u}_{\cH},
\end{align}
In addition, for all $\Coscil>0$ there exists $C_4>0$ such that if 
\beq\label{eq:oscil}
\N{g}_{\Zspace_{\ell-1}}\leq \Coscil \N{g}_{\cH^*}
\eeq
and $h$ satisfies \eqref{eq:threshold_abs} then
\beq\label{eq:rel_error_abs}
\frac{\N{u-u_h}_{\mathcal{H}} 
}{
\N{u}_{\cH}
}
\leq C_4 \big(1+\eta(\cH_h)\big)\N{I-\Pi_h}_{\Zspace_{\ell+1}\to \cH};
\eeq
i.e., the relative error in $\cH$ can be made controllably small by making $\eta(\cH_h)\N{I-\Pi_h}_{\Zspace_{\ell+1}\to \cH}$ sufficiently small.
\end{theorem}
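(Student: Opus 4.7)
The plan is to generalise the classical Schatz duality argument (the $p=1$ elliptic-projection proof), with two key refinements: (i) taking the elliptic projection $\widetilde{P}:\cH\to\cH_h$ with respect to the Hermitian coercive form $b(u,v):=\Re a(u,v)+\CGt\langle u,v\rangle_{\cH_0}$, which is coercive on $\cH$ thanks to \eqref{eq:Garding}; and (ii) iterating the Aubin--Nitsche step through the full scale $\cZ_0\subset\cdots\subset\cZ_{\ell+1}$, invoking \eqref{eq:ellipticReg} at each stage. Well-posedness of $u$ would follow from Fredholm theory applied to the G\aa rding form (using the compact embedding $\cH\hookrightarrow\cH_0$), with uniqueness supplied by the given existence of $\cR^*$. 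C\'ea's lemma for $b$ gives $\N{u-\widetilde{P}u}_\cH\leq C\min_{v_h}\N{u-v_h}_\cH$, and $\ell$ iterations of Aubin--Nitsche against $\Re a$ yield the superconvergence estimate $\N{u-\widetilde{P}u}_{\cH_0}\leq C\N{I-\Pi}_{\cZ_{\ell+1}\to\cH}\N{u-\widetilde{P}u}_\cH$. The same reasoning applied to $\phi:=\cR^*g$ with $g\in\cH_0$ produces the companion estimates $\N{(I-\widetilde{P})\phi}_\cH\leq C\eta(\cH_h)\N{g}_{\cH_0}$ and $\N{(I-\widetilde{P})\phi}_{\cH_0}\leq C\eta(\cH_h)\N{I-\Pi}_{\cZ_{\ell+1}\to\cH}\N{g}_{\cH_0}$.

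Next, decompose $u-u_h=(u-\widetilde{P}u)+e_h$ with $e_h:=\widetilde{P}u-u_h\in\cH_h$. Galerkin orthogonality for $u-u_h$ together with the defining identity $b(\widetilde{P}u-u,v_h)=0$ gives, for every $v_h\in\cH_h$,
\[
a(e_h,v_h)=(a-b)(\widetilde{P}u-u,v_h)=\ri\,\Im a(\widetilde{P}u-u,v_h)-\CGt\langle\widetilde{P}u-u,v_h\rangle_{\cH_0}.
\]
Testing with $v_h=e_h$, applying \eqref{eq:Garding}, and using Young's inequality produces the energy bound
$\N{e_h}_\cH^2\leq C\N{u-\widetilde{P}u}_\cH^2+C\N{u-\widetilde{P}u}_{\cH_0}^2+C\N{e_h}_{\cH_0}^2.$
To close this, I would bound $\N{e_h}_{\cH_0}$ via the duality identity $\N{e_h}_{\cH_0}^2=a(e_h,\cR^*e_h)$, splitting $\cR^*e_h=\widetilde{P}\cR^*e_h+(I-\widetilde{P})\cR^*e_h$: the first piece lies in $\cH_h$ and is fed into the $e_h$-equation, yielding a contribution controlled by $\big(\N{u-\widetilde{P}u}_\cH+\N{u-\widetilde{P}u}_{\cH_0}\big)\N{e_h}_{\cH_0}$; the second is handled by exploiting $b(e_h,(I-\widetilde{P})\cR^*e_h)=0$ so that only the $\Im a$-term and the $\CGt\langle\cdot,\cdot\rangle_{\cH_0}$-term survive, and these are then estimated with the two approximation bounds above. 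Combining with the energy bound and using the superconvergence estimate for $u-\widetilde{P}u$ gives \eqref{eq:H1bound_abs} and \eqref{eq:L2bound_abs} under \eqref{eq:threshold_abs}; existence/uniqueness of $u_h$ follow from the a priori bound.

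For \eqref{eq:rel_error_abs}, I would apply \eqref{eq:H1bound_abs} together with $\min_{v_h}\N{u-v_h}_\cH\leq\N{I-\Pi}_{\cZ_{\ell+1}\to\cH}\N{u}_{\cZ_{\ell+1}}$; iterating \eqref{eq:ellipticRegA} $\ell$ times and using the oscillation hypothesis \eqref{eq:oscil} yields $\N{u}_{\cZ_{\ell+1}}\leq C(\N{u}_{\cH_0}+\N{g}_{\cZ_{\ell-1}})\leq C\N{u}_\cH$, and the result follows. The principal obstacle is the bookkeeping in the duality step: a naive version using $\Pi$ in place of $\widetilde{P}$ produces a term of size $\eta(\cH_h)^2\N{e_h}_\cH^2$ which can be absorbed only when $\eta(\cH_h)$ alone is small --- the \emph{asymptotic} rather than preasymptotic regime. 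The purpose of using the elliptic projection $\widetilde{P}$ is precisely to make $\Re a$ vanish against $(I-\widetilde{P})\cR^*e_h$, so that only the lower-order $\Im a$ and mass terms remain, which are then bounded using the \emph{superconvergent} $\cH_0$-Aubin--Nitsche estimate; this replaces one factor of $\eta(\cH_h)$ by the product $\eta(\cH_h)\N{I-\Pi}_{\cZ_{\ell+1}\to\cH}$ at exactly the right place, matching the correct $(hk)^{2p}\Csol$ scaling.
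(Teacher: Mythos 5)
Your argument has a genuine gap at its central step: the ``superconvergence'' estimate $\N{(I-\widetilde{P})v}_{\cH_0}\leq C\N{I-\Pi}_{\cZ_{\ell+1}\to\cH}\N{(I-\widetilde{P})v}_{\cH}$, which you claim follows from ``$\ell$ iterations of Aubin--Nitsche''. This estimate is false for $\ell>1$, and the iteration cannot be carried out. One duality step for the coercive Hermitian form $b$ writes $\N{e}_{\cH_0}^2=b(e,\cR_b e)$ with $e=(I-\widetilde{P})v$ and $\cR_b$ the solution operator of $b$; Galerkin orthogonality and the regularity shift \eqref{eq:ellipticReg} then give only $\N{e}_{\cH_0}\leq C\N{I-\Pi}_{\cZ_2\to\cH}\N{e}_{\cH}$, i.e.\ a single power of $hk$, because the dual datum $e$ lies merely in $\cH_0$. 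To iterate you would need $e$ itself to gain regularity at each step, which it does not: $\widetilde{P}v$ is a finite-element function with no Sobolev regularity beyond $\cH$, and $v$ is an arbitrary element of $\cH$ (in your application $v=\cR^*(u-u_h)$, whose datum $u-u_h$ is only in $\cH_0$). Concretely, taking $v$ oscillating at frequency $\sim 1/h\gg k$ gives $\N{(I-\widetilde{P})v}_{\cH_0}\sim hk\,\N{(I-\widetilde{P})v}_{\cH}$, not $(hk)^{\ell}$. With only this one-power gain your scheme reproduces the classical elliptic-projection threshold ``$hk\,\eta(\cH_h)$ small'', i.e.\ $(hk)^{p+1}\Csol$, and not the preasymptotic condition \eqref{eq:threshold_abs}; the same defect invalidates your companion estimate $\N{(I-\widetilde{P})\cR^*g}_{\cH_0}\leq C\eta(\cH_h)\N{I-\Pi}_{\cZ_{\ell+1}\to\cH}\N{g}_{\cH_0}$.

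This is exactly the obstruction the paper's proof is built to avoid: rather than asking the projection error to superconverge, Lemma \ref{lem:abs1} constructs a self-adjoint \emph{smoothing} operator $S=\psi(\cP)$ (via the spectral theorem for the operator $\cP$ of $\Re a$, with $\psi$ compactly supported) satisfying $\N{S}_{\cH_0\to\cZ_j}\leq C$ for $j\leq\ell+1$, and replaces $a$ by the coercive form $\widetilde{a}=a+\langle S\cdot,\cdot\rangle_{\cH_0}$, whose solution operator $\widetilde{\cR}$ maps $\cZ_{j-2}\to\cZ_j$. The mass-type term in the duality identity then becomes $\langle u-u_h,S^*(I-\widetilde{\Pi})\cR^*(u-u_h)\rangle_{\cH_0}$, and a \emph{single} duality step yields \eqref{eq:STP1Ab} with the full factor $\N{I-\Pi}_{\cZ_{\ell+1}\to\cH}$, precisely because the dual datum $SS^*(I-\widetilde{\Pi})v$ already lies in $\cZ_{\ell-1}$, so $\widetilde{\cR}SS^*(I-\widetilde{\Pi})v\in\cZ_{\ell+1}$. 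A secondary problem with your set-up: projecting with $b=\Re a+\CGt\langle\cdot,\cdot\rangle_{\cH_0}$ makes the consistency term $a-b=\ri\,\Im a-\CGt\langle\cdot,\cdot\rangle_{\cH_0}$, and in the intended applications $\Im a$ is not lower order (for PML the matrix $A$ is complex, so $\Im a$ contains gradient--gradient terms), so it cannot be absorbed through an $\cH_0$-estimate; the paper instead keeps the full non-Hermitian $a$ inside $\widetilde{a}$ and uses $\Re a$ only to build $S$. Your well-posedness reasoning and the relative-error step (iterating \eqref{eq:ellipticRegA} together with \eqref{eq:oscil} to get $\N{u}_{\cZ_{\ell+1}}\leq C\N{u}_{\cH}$) are correct and agree with the paper.
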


By the order of quantifiers in Theorem \ref{thm:ep_abs}, $C_1, C_2,$ and $C_3$ depend only on $\Ccont, \CGo, \CGt, \Cell$, and $\ell$, and $C_4$ depends only on $\Ccont, \CGo, \CGt, \Cell$, $\ell$, and $\Coscil$.

The bound \eqref{eq:H1bound_abs} implies 
 the result that the sequence of Galerkin solutions are quasioptimal 
 if $\eta(\cH_h)$ is sufficiently small -- with this the so-called \emph{asymptotic regime}.
Theorem \ref{thm:ep_abs}, however, holds under the less-restrictive condition that $\eta(\cH_h)\|I-\Pi_h\|_{\Zspace_{\ell+1}\to \mathcal{H}}$ be small (i.e., the condition \eqref{eq:threshold_abs}), 
and so is valid in part of the preasymptotic regime.

The bounds \eqref{eq:H1bound_abs}, \eqref{eq:L2bound_abs}, and \eqref{eq:rel_error_abs} and the meshthreshold \eqref{eq:threshold_abs} 
all involve the quantity $\eta(\cH_h)$, which measures how well solutions of the adjoint problem are approximated in the space $\cH_h$. 
Bounds on $\eta(\cH_h)$ are given in \cite{MeSa:10, MeSa:11, EsMe:12, ChNi:20, LSW3, LSW4, GLSW1, BeChMe:22}. 
The following bound on $\eta(\cH_h)$ 
is essentially the one in \cite{ChNi:20}, although our proof is different. We include this bound here 
both for completeness, and 
because, after establishing the key intermediate result for proving Theorem \ref{thm:ep_abs} (Lemma \ref{lem:abs1} below) our proof of 
the bound on $\eta(\cH_h)$ is very short (see \S\ref{sec:eta_abs} below).

\begin{assumption}[Elliptic regularity assumption for the adjoint sesquilinear form]\label{ass:er2}
With $\Zspace_j$, $j=0,\ldots,\ell+1$, as in Assumption \ref{ass:er}, there exists a $\Celladj>0$ such 
\begin{equation}
\label{eq:ellipticRegAadj}
\N{v}_{\Zspace_j}\leq \Celladj\Big(\N{v}_{\mathcal{H}_0}+\sup_{\substack{u\in \mathcal{H}\euanspace \|u\|_{(\Zspace_{j-2})^*}=1}}| a(u,v)|\Big),\quad
j=2,\dots,\ell+1,
\end{equation}
with $v\in \Zspace_j$ if the right-hand side of \eqref{eq:ellipticRegAadj} is finite.
\end{assumption}

\begin{theorem}[Bound on $\eta(\cH_h)$]\label{thm:eta_abs}
Suppose that the assumptions of Theorem \ref{thm:ep_abs} hold and, additionally, Assumption \ref{ass:er2} holds. 
With $\cR^*$ and $\eta(\cH_h)$ defined by \eqref{eq:R*_abs} and \eqref{eq:eta_def_abs} respectively, 
there exists $C>0$ 
such that
\beq\label{eq:eta_bound_abs}
\eta(\cH_h) \leq C\Big(
\N{(I-\Pi_h)}_{\Zspace_{2}\to \cH}+ 
\N{(I-\Pi_h)}_{\Zspace_{\ell+1}\to \cH}\N{\cR^*}_{\cH_0\to \cH_0}
\Big).
\eeq
\end{theorem}

\begin{example}\label{ex:threshold}
In \S\ref{sec:PML} and \S\ref{sec:impedance} below we show how Helmholtz problems with the radiation condition 
\emph{either} realised by the exact Dirichlet-to-Neumann map on the boundary of a ball \emph{or}
approximated by either a radial  PML or an impedance boundary condition, respectively, fit into the abstract framework of Theorems \ref{thm:ep_abs} and \ref{thm:eta_abs}. In both these cases,
the norm of the adjoint solution operator, i.e., $\N{\cR^*}_{\cH_0\to \cH_0}$, is the same as the norm of the solution operator of the original (non-adjoint) problem, which we denote by $\Csol$. Furthermore, with $\{\cH_h\}_{h>0}$ corresponding to the standard finite-element spaces of piecewise degree-$p$ polynomials on shape-regular simplicial triangulations, indexed by the meshwidth $h$, 
\beq\label{eq:poly_approx}
\N{(I-\Pi_h)}_{\Zspace_{m+1}\to \cH} \leq C(hk)^m \quad \tfor 0\leq m\leq p.
\eeq
The meshthreshold \eqref{eq:threshold_abs} then becomes that $(hk)^{2\ell}\Csol$ is sufficiently small when $\ell \leq p$. Recall that $\ell$ is a parameter in the elliptic-regularity assumptions (Assumptions \ref{ass:er} and \ref{ass:er2}). If the polynomial degree $p$ is taken to be $\ell$ then \eqref{eq:threshold_abs} becomes \eqref{eq:threshold}; the bounds \eqref{eq:H1bound_abs} and \eqref{eq:L2bound_abs} then become \eqref{eq:H1bound} and \eqref{eq:L2bound}, respectively.
\end{example}

\section{Proofs of the main results (Theorems \ref{thm:ep_abs} and \ref{thm:eta_abs})}

\subsection{Construction of a regularizing operator that produces coercivity when added to $a$}

\begin{lemma}\label{lem:abs1}
Suppose that $a:\mathcal{H}\times \mathcal{H}\to \mathbb{C}$ satisfies~\eqref{eq:contAb}, \eqref{eq:Garding}, and Assumption \ref{ass:er} for some $\ell \in \mathbb{Z}^+$.
Then there exists $S:\mathcal{H}_0\to \mathcal{H}_0$ self adjoint and $c,C>0$ such that, with 
\begin{align}
\label{eq:tAab}
\widetilde{a}(u,v)&:=a(u,v)+\langle Su,v\rangle_{\mathcal{H}_0},\\
\label{eq:aTildeAb}
\Re\widetilde{a}(v,v) &\geq c\N{v}_{\mathcal{H}}^2 \,\,\tfa v \in \cH,\\
\label{eq:Sreg}
\|S\|_{\mathcal{H}_0\to \Zspace_j}&\leq C,\qquad j=0,\dots,\ell-1,
\end{align}
and $\widetilde{\mathcal{R}}:\mathcal{H}^*\to \mathcal{H}$ defined by 
\begin{gather}\label{eq:tildeR}
\widetilde{a}(\widetilde{\mathcal{R}}f,v)=\langle f,v\rangle\quad\tfa v\in \mathcal{H},\,f\in\mathcal{H}^*,
\end{gather}
is well defined with
\beq\label{eq:Rtildereg}
\|\widetilde{\mathcal{R}}\|_{\Zspace_{j-2}\to \Zspace_j}\leq C,\qquad 2\leq j\leq \ell+1.
\eeq
\end{lemma}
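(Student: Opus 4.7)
The plan is to build $S$ as a rescaled spectral projection onto low-frequency eigenspaces of the self-adjoint operator associated with the Hermitian form
\begin{equation*}
b(u,v):=\Re a(u,v)+\CGt\langle u,v\rangle_{\mathcal{H}_0}.
\end{equation*}
By the G\aa rding inequality~\eqref{eq:Garding}, $b$ is coercive on $\mathcal{H}$ with constant $\CGo$, and the continuity~\eqref{eq:contAb} makes it bounded. Since $\mathcal{H}\hookrightarrow\mathcal{H}_0$ is compact, the positive self-adjoint operator $A$ on $\mathcal{H}_0$ associated with $b$ has compact resolvent, hence an $\mathcal{H}_0$-orthonormal basis $\{e_j\}$ of eigenfunctions with $Ae_j=\lambda_j e_j$ and $\lambda_j\to+\infty$. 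I then fix $N\geq 2\CGt$ and set $S:=\CGt\,P_N$, where $P_N$ is the $\mathcal{H}_0$-orthogonal projection onto $\mathrm{span}\{e_j:\lambda_j\leq N\}$; in particular $S$ is self-adjoint on $\mathcal{H}_0$ by construction.

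Coercivity of $\widetilde a$ then follows from the spectral identity
\begin{equation*}
\Re\widetilde{a}(v,v)=b(v,v)-\CGt\|v\|^2_{\mathcal{H}_0}+\CGt\|P_N v\|^2_{\mathcal{H}_0}=b(v,v)-\CGt\|(I-P_N)v\|^2_{\mathcal{H}_0},
\end{equation*}
together with the high-frequency estimate $b(w,w)\geq N\|w\|^2_{\mathcal{H}_0}$ for $w$ in the range of $I-P_N$ (a direct consequence of the spectral decomposition). The choice $N\geq 2\CGt$ absorbs the negative contribution into $\tfrac{1}{2}b$, producing $\Re\widetilde a(v,v)\geq \tfrac12 b(v,v)\geq \tfrac{\CGo}{2}\|v\|^2_\mathcal{H}$, which is~\eqref{eq:aTildeAb}. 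Continuity of $\widetilde a$ (which follows from~\eqref{eq:contAb} and \eqref{eq:Sreg} with $j=0$) combined with this coercivity then gives existence and uniqueness of $\widetilde{\mathcal{R}}$ by Lax--Milgram, with the a priori bound $\|\widetilde{\mathcal{R}}f\|_\mathcal{H}\leq C\|f\|_{\mathcal{H}^*}$.

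The main technical step is~\eqref{eq:Sreg}. Since $P_N$ has finite-dimensional range, it is enough to bound each eigenfunction $e_k$ with $\lambda_k\leq N$ in $\mathcal{Z}_{\ell+1}$ with a constant depending only on $N$ and the ambient data. This is done by bootstrapping~\eqref{eq:ellipticReg}: the eigenvalue equation rewrites as $\Re a(e_k,v)=(\lambda_k-\CGt)\langle e_k,v\rangle_{\mathcal{H}_0}$, and once $e_k\in\mathcal{Z}_{j-2}$ is known the Gelfand-triple interpretation of $\langle\cdot,\cdot\rangle_{\mathcal{H}_0}$ gives
\begin{equation*}
|\langle e_k,v\rangle_{\mathcal{H}_0}|\leq \|e_k\|_{\mathcal{Z}_{j-2}}\|v\|_{(\mathcal{Z}_{j-2})^*},
\end{equation*}
so that~\eqref{eq:ellipticReg} upgrades $e_k$ to $\mathcal{Z}_j$. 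Starting from $\|e_k\|_{\mathcal{H}_0}=1$ and $\|e_k\|_\mathcal{H}\leq(\lambda_k/\CGo)^{1/2}$ (from coercivity of $b$), a finite induction in $j$ yields $e_k\in\mathcal{Z}_{\ell+1}$; norm equivalence on the finite-dimensional range of $P_N$ then gives~\eqref{eq:Sreg}. The analogous estimate~\eqref{eq:Rtildereg} for $\widetilde{\mathcal{R}}$ is obtained by applying the same duality argument once to $a(u,v)=\langle f,v\rangle-\langle Su,v\rangle_{\mathcal{H}_0}$ with $u=\widetilde{\mathcal{R}}f$: \eqref{eq:Sreg} together with the a priori $\mathcal{H}$-bound give $\sup_{\|v\|_{(\mathcal{Z}_{j-2})^*}=1}|a(u,v)|\leq C\|f\|_{\mathcal{Z}_{j-2}}$, and then~\eqref{eq:ellipticRegA} delivers the result. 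I expect the main obstacle to be the bookkeeping for the Gelfand-triple pairings in the eigenfunction bootstrap, i.e., checking at each induction step that $\langle e_k,\cdot\rangle_{\mathcal{H}_0}$ extends continuously to the test space $(\mathcal{Z}_{j-2})^*$; everything else is routine spectral theory combined with the stated elliptic-regularity hypotheses.
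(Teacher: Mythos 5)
Your construction is the same spectral-theoretic idea as the paper's: the paper takes $S=\psi(\mathcal{P})$, where $\mathcal{P}$ is the operator associated with $\Re a$ and $\psi\geq 0$ is smooth, compactly supported, with $x+\psi(x)\geq 1$ on the spectrum, while you take the sharp spectral cutoff $S=\CGt P_N$ for the shifted form $b$; your coercivity argument and your derivation of \eqref{eq:Rtildereg} from \eqref{eq:ellipticRegA} are minor variants of the paper's, and the Gelfand-triple pairing you flag as the main bookkeeping issue is exactly the convention the paper itself uses elsewhere.

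The genuine gap is in your proof of \eqref{eq:Sreg}. You reduce to bounding the finitely many eigenfunctions $e_k$ with $\lambda_k\leq N$ individually in $\mathcal{Z}_{\ell+1}$ and then invoke ``norm equivalence on the finite-dimensional range of $P_N$''. The constant produced this way depends on the rank of $P_N$ (expanding $P_N v=\sum_{\lambda_k\leq N}\langle v,e_k\rangle e_k$ and using Cauchy--Schwarz gives at best a factor $(\operatorname{rank}P_N)^{1/2}$ times the individual eigenfunction bounds), and that rank is controlled by none of the constants in \eqref{eq:contAb}, \eqref{eq:Garding}, or Assumption \ref{ass:er}. The paper requires every constant in this section to be independent of $k$; in the Helmholtz application $\mathcal{P}$ is a second-order elliptic operator with principal part $-k^{-2}\nabla\cdot(\Re A\,\nabla)$, so by the Weyl law the number of eigenvalues below the fixed threshold $N$ grows like $k^{d}$. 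Your constant in \eqref{eq:Sreg} therefore grows with $k$, and since \eqref{eq:Sreg} feeds directly into \eqref{eq:STP1Ab} and into the proof of Theorem \ref{thm:eta_abs}, this would destroy the claimed preasymptotic bounds (though, with all data regarded as fixed, your argument does prove the literal abstract statement). The fix is to avoid descending to individual eigenfunctions: run the bootstrap \eqref{eq:ellipticReg} at the operator level, using that $P_N$ commutes with $\mathcal{P}$ and $\|\mathcal{P}^{m}S\|_{\mathcal{H}_0\to\mathcal{H}_0}\leq \CGt(N+\CGt)^{m}$, which is exactly the paper's route (there the corresponding input is $\sup_t|t^m\psi(t)|<\infty$, i.e., \eqref{eq:Point1}); then the constant in \eqref{eq:Sreg} depends only on $N$, $\CGt$, and the elliptic-regularity constants. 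A related minor point in your favour: you choose the cutoff level from $\CGt$ alone ($N\geq 2\CGt$) rather than from the actual bottom of the spectrum, which is the right thing to do for uniformity; the paper's choice of $\psi$ via $\lambda_1$ can likewise be made uniform by replacing $\lambda_1$ with $-\CGt$, since $\lambda_1\geq-\CGt$ by \eqref{eq:Garding}.
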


\bre[Relation with the original elliptic-projection argument]
The original elliptic-projection argument  \cite{FeWu:09, FeWu:11} uses coercivity of \eqref{eq:tAab} with $S$ a sufficiently large multiple of the identity (see \cite[Lemma 5.1]{FeWu:09}, \cite[Lemma 4.1]{FeWu:11}). This particular $S$ satisfies Lemma \ref{lem:abs1} with $\ell=1$; the threshold \eqref{eq:threshold_abs} in Theorem \ref{thm:ep_abs} is then ``$\eta(\cH_h)\|I-\Pi_h\|_{\Zspace_{2}\to \mathcal{H}}$ sufficiently small", which (by \eqref{eq:eta_bound_abs} and \eqref{eq:poly_approx}) becomes the condition ``$(hk)^{\ell+1}\Csol$ sufficiently small'' discussed in \S\ref{sec:context}.
\ere

The proof of Lemma \ref{lem:abs1} uses the spectral theorem for bounded self-adjoint operators, $B:\cH\to \cH^*$, which we recap here.
With $\cH_0$ and $\cH$ as in \S\ref{sec:statement}, let $b$ be a sesquilinear form on $\cH$ satisfying $b(u,v)=\overline{b(v,u)}$, with associated operator $B$; i.e., $b(u,v)= \langle Bu,v\rangle$ for all $u,v\in \cH$. If $b$ satisfies the G\aa rding inequality \eqref{eq:Garding} (with $a$ replaced by $b$) then 
there exist an orthonormal basis (in $\cH_0$) of eigenfunctions of $B$, $\{\phi_j\}_{j=1}^\infty$, with associated eigenvalues satisfying $\lambda_1\leq \lambda_2 \leq \ldots$ with $\lambda_j\to \infty$ as $j\to\infty$. Furthermore, for all $u \in \cH$, 
\beq\label{eq:FC0a}
Bu = \sum_{j=1}^\infty \lambda_j \langle \phi_j , u\rangle \phi_j
\eeq
(where the sum converges in $\cH^*$); see, e.g., \cite[Theorem 2.37]{Mc:00}.
Given a bounded function $f$, we define $f(B):\cH_0\to \cH_0$ by 
\beq
f(B)u:= \sum_{j=1}^\infty f(\lambda_j) \langle \phi_j , u\rangle \phi_j,
\label{eq:FC1}
\quad\text{ so that }\quad
\N{f(B)}_{\cH_0\to \cH_0} \leq \sup_{\lambda \in [\lambda_1,\infty) } |f(\lambda)|.
\eeq

\begin{proof}[Proof of Lemma \ref{lem:abs1}]
Let $\mathcal{P}:\cH\to \cH^*$ be the operator associated with the sesquilinear form $\Re a$ defined by \eqref{eq:reala}, i.e., $(\Re a)(u,v) = \langle \cP u, v\rangle$ for all $u,v\in \cH$; observe that $\cP$ is self-adjoint. Since $(\Re a)$ also satisfies the G\aa rding equality satisfied by $a$ \eqref{eq:Garding}, the spectral theorem recapped above applies. 
Let $\{\lambda_j\}_{j=1}^\infty$ be the eigenvalues of $\cP$ with $\lambda_{j}\leq \lambda_{j+1}$, $j=1,\dots$,
let $\psi\in C_{\rm comp}^\infty(\mathbb{R};[0,\infty))$ be such that
\beq\label{eq:psi}
x+\psi(x)\geq 1 \quad\tfor x\geq \lambda_1,
\eeq
and let 
$
S:=\psi(\mathcal{P}),
$
in the sense of \eqref{eq:FC1}.

We now use \eqref{eq:ellipticReg} to prove that $S:\mathcal{H}_0\to \Zspace_j$ satisfies \eqref{eq:Sreg}.
Since $\psi$ has compact support, the function $t\mapsto t^m \psi(t)$ is bounded for any $m\geq 0$. Thus \eqref{eq:FC1} implies that, for any $m\geq 0$,
\begin{equation}
\label{eq:Point1}
\N{\mathcal{P}^m \psi(\mathcal{P})}_{\cH_0\to \cH_0}\leq C_m.
\end{equation}
By \eqref{eq:ellipticReg},
$$
\N{\psi(\mathcal{P})}_{\cH_0\to \Zspace_j}\leq \Cell\Big(\N{\psi(\mathcal{P})}_{\cH_0\to \cH_0}+\N{\mathcal{P}\psi(\mathcal{P})}_{\cH_0\to \Zspace_{j-2}}\Big),\quad j=2,\dots,\ell+1,
$$
so that, by induction and \eqref{eq:Point1},
$$
\N{S}_{\cH_0\to \Zspace_{\ell-1 }}=
\N{\psi(\mathcal{P})}_{\cH_0\to \Zspace_{\ell-1 }}\leq C_\ell\sum_{j=0}^{
\lceil (\ell-1 )/2\rceil
}\N{\mathcal{P}^j\psi(\mathcal{P})}_{\cH_0\to \cH_0}\leq C_\ell.
$$

We now show that $\widetilde{a}$ satisfies \eqref{eq:aTildeAb}.
By the definitions of $\cP$ and $S$,
the eigenfunction expansions \eqref{eq:FC0a} and \eqref{eq:FC1}, and the inequality \eqref{eq:psi}, for all $v\in \cH$,
$$
\Re \widetilde{a}(v,v)= \Re a(v,v)+\langle \psi(\mathcal{P})v,v\rangle= \langle (\mathcal{P}+\psi(\mathcal{P}))v,v\rangle\geq \N{v}_{\mathcal{H}_0}^2.
$$
Since $\psi\geq 0$, $S$ is positive, and thus $\Re \widetilde{a}(v,v)\geq \Re a(v,v)$ for all $v \in \cH$, 
for any $\e>0$ and for all $v\in \cH$,
$$
\Re \widetilde{a}(v,v) \geq \e \Re a(v,v)+(1-\e)\Re \widetilde{a}(v,v)\geq \e \CGo\N{v}_{\mathcal{H}}^2-\CGt\e \N{v}_{\mathcal{H}_0}^2+(1-\e)\|v\|_{\mathcal{H}_0}^2.
$$
Therefore, if 
$\e=(1+\CGt)^{-1}$, then
$$
\Re \widetilde{a}(v,v)\geq \CGo (1 + \CGt)^{-1}\N{v}_{\mathcal{H}}^2;
$$
i.e., $\widetilde{a}$ is coercive. The existence of $\widetilde{\mathcal{R}}:\mathcal{H}^*\to \mathcal{H}$ satisfying \eqref{eq:tildeR}
and
$
\|\widetilde{\mathcal{R}}\|_{\mathcal{H}^*\to \mathcal{H}}\leq C
$
then follows from the Lax--Milgram lemma (see, e.g., \cite[Lemma 2.32]{Mc:00}).
Finally, to see that 
$$
\|\widetilde{\mathcal{R}}\|_{\Zspace_{j-2}\to \Zspace_j}\leq C,\qquad 2\leq j\leq \ell+1,
$$
observe that, since $S$ is self-adjoint and satisfies \eqref{eq:Sreg}, for $v\in (\Zspace_{j-2})^*$,
\begin{align*}
| a(\widetilde{\mathcal{R}}g,v)|=|  \widetilde{a}(\widetilde{\mathcal{R}}g,v)-\langle S\widetilde{\mathcal{R}}g,v\rangle|
&\leq | \widetilde{a}(\widetilde{\mathcal{R}}g,v)|+|\langle S\widetilde{\mathcal{R}}g,v\rangle|\\
&\hspace{-1cm}\leq | \langle v,g\rangle|+\|v\|_{(\Zspace_{j-2})^*}\|S\|_{\mathcal{H}\to \Zspace_{j-2}}\|(\widetilde{\mathcal{R}})^*\|_{\mathcal{H}^*\to \mathcal{H}}\|g\|_{\mathcal{H}^*}\\
&\hspace{-1cm}\leq \|v\|_{(\Zspace_{j-2})^*}(\|g\|_{\Zspace_{j-2}}+C\|g\|_{\mathcal{H}^*}),
\end{align*}
and the claim follows from \eqref{eq:ellipticRegA}.
\end{proof}

\subsection{Proof Theorem \ref{thm:ep_abs} using Lemma \ref{lem:abs1}}

By, e.g., \cite[Theorem 2.34]{Mc:00}, the operator associated to the sesquilinear form $a$ is Fredholm of index zero. 
The fact that the solution to \eqref{eq:vp_abs} is unique therefore implies that the solution exists, and also implies that $\cR^*$ is well-defined (by, e.g., \cite[Theorem 2.27]{Mc:00}).

We claim it is sufficient to prove the bounds \eqref{eq:H1bound_abs} and \eqref{eq:L2bound_abs} under the assumption of existence. Indeed, by uniqueness of the variational problem \eqref{eq:vp_abs}, either of the bounds \eqref{eq:H1bound_abs} or \eqref{eq:L2bound_abs} under the assumption of existence implies uniqueness of $u_h$, 
 and uniqueness implies existence for the finite-dimensional Galerkin linear system. 

We next show that the bound \eqref{eq:H1bound_abs} follows from \eqref{eq:L2bound_abs}. 
In the rest of the proof, $C$ denotes a constant (whose value may change from line to line) that only depends on $\Ccont,\CGo,\CGt,\Cell,$ and $\ell$. 
By the G\aa rding inequality \eqref{eq:Garding}, Galerkin orthogonality 
\beq\label{eq:Gog}
a(u-u_h, v_h)= 0 \quad\tfa v_h\in \fdspace,
\eeq
and \eqref{eq:L2bound_abs}, for any $v_h\in \cH_h$,
\begin{align}
\label{eq:L2toH1a}
\N{u-u_h}^2_{\cH} &\leq C\Big[ \big| a(u-u_h, u-v_h)\big| +   \N{u-u_h}^2_{\cH_0}\Big]\\
&\leq C\Big[ \N{u-u_h}_{\cH} \N{ u-v_h}_{\cH}
+  \Big(
\eta(\cH_h)
\N{(I-\Pi_h)u}_{\cH}
\Big)^2\Big].\label{eq:L2toH1}
\end{align}
The bound \eqref{eq:H1bound_abs} on the error in $\cH$ then follows by using the inequality $2ab\leq \epsilon a^2 + b^2/\epsilon$ for all $a,b,\epsilon>0$ in the first term on the right-hand side of \eqref{eq:L2toH1}, and then using the inequality $a^2+b^2\leq (a+b)^2$ for $a,b>0$.

We now prove \eqref{eq:L2bound_abs}. 
By the definition of $\cR^*$, Galerkin orthogonality \eqref{eq:Gog}, and the definition of $\widetilde a$~\eqref{eq:tAab}
\begin{align}\nonumber
\N{u-u_h}^2_{\mathcal{H}_0} & = a \big( u-u_h, \cR^*(u-u_h)  \big) = a \big(u-u_h, \cR^* (u-u_h) - v_h \big)\\
& = \widetilde{a} \big( u-u_h, \cR^* (u-u_h) - v_h \big) - \big\langle S(u-u_h), \cR^*(u-u_h) -v_h\big\rangle_{\mathcal{H}_0}.
\label{eq:JJS1Ab}
\end{align}
Let $\widetilde{\Pi}_h: \cH\to\cH_h$ 
be the solution of the variational problem
\beqs
\widetilde{a} (w_h, \widetilde{\Pi}_h v ) = \widetilde{a} (w_h , v) \quad\tfa w_h \in \cH_h.
\eeqs
Since $\widetilde{a}$ is continuous (by \eqref{eq:contAb} and \eqref{eq:Sreg}) and coercive 
(by \eqref{eq:aTildeAb}), by the Lax--Milgram lemma and C\'ea's lemma (see, e.g., \cite[Theorem 2.8.1]{BrSc:08}), 
$\widetilde{\Pi}_h$ is well-defined with
\beq\label{eq:epqoAb}
\big\|(I-\widetilde{\Pi}_h)v\big\|_{\cH}\leq C\N{(I-\Pi_h)v}_{\cH}. 
\eeq
The definition of $\widetilde{\Pi}_h$ implies the Galerkin orthogonality
\beq\label{eq:epGogAb}
\widetilde{a} \big(w_h, (I-\widetilde{\Pi}_h) u \big) =0\quad\tfa w_h \in \cH_h.
\eeq
We now choose $v_h= \widetilde{\Pi}_h \cR^* (u-u_h)$ in \eqref{eq:JJS1Ab} so that, by \eqref{eq:epGogAb}, 
\begin{align} \nonumber
&\N{u-u_h}^2_{\mathcal{H}_0} \\ \nonumber
& = \widetilde{a} \big((I-\Pi_h)u, (I-\widetilde{\Pi}_h)\cR^* (u-u_h) \big) - \big\langle u-u_h, S^*(I-\widetilde{\Pi}_h)\cR^* (u-u_h)\big\rangle_{\mathcal{H}_0}\\ 
&\leq C \N{(I-\Pi_h)u}_{\cH}\big\|(I-\widetilde{\Pi}_h)\cR^* (u-u_h)\big\|_{\cH} 
+ \N{u-u_h}_{\mathcal{H}_0} \big\|S^*(I-\widetilde{\Pi}_h)\cR^* (u-u_h)\big\|_{\mathcal{H}_0}.
\label{eq:JJS2Ab}
\end{align}
By \eqref{eq:epqoAb} 
and the definition of $\eta(\cH_h)$ \eqref{eq:eta_def_abs},
\beq\label{eq:JJS3Ab}
\big\|(I-\widetilde{\Pi}_h)\cR^* (u-u_h)\big\|_{\cH}\leq C
\N{(I-\Pi_h)\cR^* (u-u_h)}_{\cH}
\leq C \eta(\cH_h) \N{u-u_h}_{\mathcal{H}_0}.
\eeq
We now claim that the bound \eqref{eq:L2bound_abs} follows if we can prove that, for all $v\in \cH$,
\beq\label{eq:STP1Ab}
\big\|S^* (I- \widetilde{\Pi}_h) v \big\|_{\mathcal{H}_0} \leq C\|I-\Pi_h\|_{\Zspace_{\ell+1}\to \mathcal{H}} \big\|(I- \widetilde{\Pi}_h) v\big\|_{\cH}.
\eeq
Indeed, we use \eqref{eq:STP1Ab}, with $v=\cR^* (u-u_h)$, in the second term on the right-hand side of \eqref{eq:JJS2Ab} to obtain
\begin{align}\nonumber
\N{u-u_h}^2_{\mathcal{H}_0} 
&\leq C \N{(I-\Pi_h)u}_{\cH}\big\|(I-\widetilde{\Pi}_h)\cR^* (u-u_h)\big\|_{\cH} \\
&\qquad+ C\|I-\Pi_h\|_{\Zspace_{\ell+1}\to \mathcal{H}}\N{u-u_h}_{\mathcal{H}_0} \big\|(I-\widetilde{\Pi}_h)\cR^* (u-u_h)\big\|_{\mathcal{H}}.
\label{eq:JJS2Ab_new}
\end{align}
We then use \eqref{eq:JJS3Ab} in both terms on the right-hand side of \eqref{eq:JJS2Ab_new} to obtain 
\begin{align*}
\N{u-u_h}^2_{\mathcal{H}_0} \leq C \eta(\cH_h)\N{(I-\Pi_h)u}_{\cH}\N{u-u_h}_{\cH_0}+ C\eta(\cH_h) \|I-\Pi_h\|_{\Zspace_{\ell+1}\to \mathcal{H}}\N{u-u_h}_{\mathcal{H}_0}^2,
\end{align*}
from which \eqref{eq:L2bound_abs}, under the condition \eqref{eq:threshold_abs}, follows.

We now prove \eqref{eq:STP1Ab} by using a standard duality argument. 
By the definition of $\widetilde{\cR}$ \eqref{eq:tildeR} and Galerkin orthogonality \eqref{eq:epGogAb}, 
\begin{align*}
\big\|S^*(I-\widetilde{\Pi}_h)v\big\|^2_{\mathcal{H}_0}= \big\langle SS^*(I-\widetilde{\Pi}_h)v,  (I-\widetilde{\Pi}_h)v\big\rangle_{\mathcal{H}_0}  &= \widetilde{a} \big( \widetilde{\cR} SS^* (I-\widetilde{\Pi}_h)v-w_h,(I-\widetilde{\Pi}_h)v\big)\\
&\hspace{-3ex}= \widetilde{a} \big((I-\Pi_h) \widetilde{\cR} SS^* (I-\widetilde{\Pi}_h)v,(I-\widetilde{\Pi}_h)v\big).
\end{align*}
Then, by continuity of $\widetilde{a}$ and the bounds \eqref{eq:Rtildereg} and \eqref{eq:Sreg},
\begin{align*}
\big\|S^*(I-\widetilde{\Pi}_h)v\big\|^2_{\mathcal{H}_0} &\leq C\big\|(I-\Pi_h)\widetilde{\cR} SS^* (I-\widetilde{\Pi}_h)v\big\|_{\cH} \big\|(I-\widetilde{\Pi}_h)v\big\|_{\cH}\\
&\leq \|I-\Pi_h\|_{\Zspace_{\ell+1}\to \mathcal{H}}\big\|\widetilde{\cR} SS^* (I-\widetilde{\Pi}_h)v\big\|_{\Zspace_{\ell+1}} \big\|(I-\widetilde{\Pi}_h)v\big\|_{\cH},\\
&\leq C\|I-\Pi_h\|_{\Zspace_{\ell+1}\to \mathcal{H}}\big\|SS^* (I-\widetilde{\Pi}_h)v\big\|_{\Zspace_{\ell-1}} \big\|(I-\widetilde{\Pi}_h)v\big\|_{\cH},\\
&\leq C\|I-\Pi_h\|_{\Zspace_{\ell+1}\to \mathcal{H}}\big\|S^* (I-\widetilde{\Pi}_h)v\big\|_{\mathcal{H}_0} \big\|(I-\widetilde{\Pi}_h)v\big\|_{\cH}
\end{align*}
which implies the bound \eqref{eq:STP1Ab}, and hence \eqref{eq:L2bound_abs}.

Finally, we prove \eqref{eq:rel_error_abs}. By~\eqref{eq:vp_abs}, \eqref{eq:oscil}, and the abstract elliptic-regularity assumption \eqref{eq:ellipticRegA}, $u\in \Zspace_{\ell +1}$ with 
\begin{align*}
\N{u}_{\Zspace_{\ell+1}}\leq C\big(\N{u}_{\cH_0}+\N{g}_{\Zspace_{\ell-1}}\big)\leq C\big(\N{u}_{\cH_0}+\N{g}_{\cH^*}\big).
\end{align*}
The variational problem \eqref{eq:vp_abs} implies that 
\beqs
\N{g}_{\cH^*} = \sup_{v\in \cH, v\neq 0}\frac{|a(u,v)|}{\N{v}_{\cH}}\leq C \N{u}_{\cH},
\eeqs
and thus $\N{u}_{\Zspace_{\ell+1}} \leq C  \N{u}_{\cH}$.
 The bound \eqref{eq:H1bound_abs} then implies that 
\begin{align*}
\N{u-u_h}_{\mathcal{H}} \leq C_2\big(1+\eta(\cH_h)\big) \N{I- \Pi_h}_{\Zspace_{\ell+1}\to \cH}\N{u}_{\Zspace_{\ell+1}}
\end{align*}
and \eqref{eq:rel_error_abs} follows.

\subsection{Proof of Theorem \ref{thm:eta_abs}}\label{sec:eta_abs}

Let 
$\widetilde{\mathcal{R}}^*:\mathcal{H}^*\to \mathcal{H}$ be defined by 
\beq\label{eq:tildeR*}
\widetilde{a}(u,\widetilde{\mathcal{R}}^*f)=\langle u,f\rangle\quad\tfa u\in \mathcal{H},\,f\in\mathcal{H}^*
\eeq
(compare to \eqref{eq:tildeR});
i.e., $\widetilde{\mathcal{R}}^*$ is the adjoint solution operator for the sesquilinear form $\widetilde{a}$.
Repeating the proof of \eqref{eq:Rtildereg} with the elliptic-regularity assumption \eqref{eq:ellipticRegA} replaced by \eqref{eq:ellipticRegAadj}, we see that 
\beq\label{eq:Rtildereg*}
\|\widetilde{\mathcal{R}}^*\|_{\Zspace_{j-2}\to \Zspace_j}\leq C,\qquad 2\leq j\leq \ell+1.
\eeq

We now claim that 
\beq\label{eq:niceFormula}
\mathcal{R}^* = \widetilde{\mathcal{R}}^*(I + S \mathcal{R}^*). 
\eeq
Indeed, by the definitions of $\widetilde{a}$ \eqref{eq:tAab} and $\mathcal{R}^*$ \eqref{eq:R*_abs} 
the fact that $S=S^*$, and the definition of $\widetilde{\mathcal{R}}^*$ \eqref{eq:tildeR*}
\begin{align*}
\widetilde{a}(u, \mathcal{R}^*f) = a(u, \mathcal{R}^*f) + \langle S u, \mathcal{R}^* f\rangle 
= \langle u, f\rangle + \langle u, S\mathcal{R}^* f \rangle
&= \langle u, (I+S\mathcal{R}^*) f \rangle\\
&= \widetilde{a} \big( u, \widetilde{\mathcal{R}}^* (I + S \mathcal{R}^* )f\big)
\end{align*}
for all $u\in \cH$ and $f\in \cH^*$. The expression \eqref{eq:niceFormula} then follows from coercivity of $\widetilde{a}$ \eqref{eq:aTildeAb}.
Then, by \eqref{eq:niceFormula} and the mapping properties \eqref{eq:Rtildereg*} and \eqref{eq:Sreg} of $\widetilde{\mathcal R}^*$ and $S$, respectively, 
\begin{align*}
&\|(I-\Pi_h)\mathcal{R}^*g\|_{\mathcal{H}} 
\leq C\Big(
\|(I-\Pi_h)\widetilde{\mathcal{R}}^*g\|_{\mathcal{H}} 
+\|(I-\Pi_h)\widetilde{\mathcal{R}}^*S \mathcal{R}^*g\|_{\mathcal{H}} 
\Big)
 \\
&\quad\qquad
\leq 
C\Big(
\|(I-\Pi_h)\|_{ \Zspace_2\to \cH} \|\widetilde{\mathcal{R}}^*\|_{\cH_0\to \Zspace_2}\|g\|_{\cH_0} \\
&\quad\qquad\qquad\quad
+ \|(I-\Pi_h)\|_{\Zspace_{\ell+1}\to \cH}
\|\widetilde{\mathcal{R}}^*\|_{\Zspace_{\ell-1}\to \Zspace_{\ell+1}}
\|S\|_{\cH_0\to\Zspace_{\ell-1}}
 \|\mathcal{R}^*\|_{\cH_0\to \cH_0} \|g\|_{\cH_0} 
\Big)
\\
&\quad\qquad
\leq C\Big(\|(I-\Pi_h)\|_{ \Zspace_2\to \cH} \|g\|_{\cH_0} 
+ \|(I-\Pi_h)\|_{\Zspace_{\ell+1}\to \cH} 
\|\mathcal{R}^*\|_{\cH_0\to \cH_0} \|g\|_{\cH_0}\Big);
\end{align*}
the result \eqref{eq:eta_bound_abs} then follows from the definition of $\eta(\cH_h)$ \eqref{eq:eta_def_abs}.

\bre[The splitting \eqref{eq:niceFormula}]
Recalling the mapping properties of $\widetilde{\mathcal R}^*$ \eqref{eq:Rtildereg*} and  $S$ \eqref{eq:Sreg}, we see that 
\eqref{eq:niceFormula} splits the Helmholtz adjoint solution into a part with finite regularity but norm bounded independently of $\|\mathcal{R}^*\|_{\cH_0\to\cH_0}$ -- namely $\widetilde{\mathcal{R}}^*$ -- and a part with the highest possible regularity allowed by the coefficients and the domain (via  Assumptions \ref{ass:er} and \ref{ass:er2}) and norm bounded by 
$\|\mathcal{R}^*\|_{\cH_0\to\cH_0}$
-- namely $\widetilde{\mathcal{R}}^*S \mathcal{R}^*$. 

We highlight that such a splitting was previously achieved in the following papers.
\bit
\item  \cite{MeSa:10, MeSa:11, EsMe:12, LSW3, LSW4, GLSW1, BeChMe:22}, which split the solution into ``high-'' and ``low-'' frequency components; in these papers the scatterer (either an impenetrable obstacle or variable coefficients) is assumed to be analytic, and thus the ``low''-frequency part of the solution is analytic (at least near the scatterer).
\item \cite{ChNi:20}, which worked under elliptic-regularity assumptions analogous to Assumptions \ref{ass:er} and \ref{ass:er2}, and 
expanded the Helmholtz solution in a series whose terms increase with regularity, with the remainder have the highest possible regularity and being norm-bounded by $\|\mathcal{R}^*\|_{\cH_0\to\cH_0}$.
\eit
\ere

\section{Elliptic-regularity results}

This section collects the elliptic-regularity results that are used to verify that Assumption \ref{ass:er} holds for Helmholtz problems with truncation of the exterior domain either by a radial  PML (in \S\ref{sec:PML}) or an impedance boundary condition/exact Dirichlet-to-Neumann map (in \S\ref{sec:impedance}).
Let 
\beq\label{eq:L}
\cL u = -k^{-2} \nabla\cdot (A \nabla u ) - c^{-2} u,
\eeq
with associated sesquilinear form 
\beqs
a(u,v) = \int_\Omega \Big( k^{-2} (A\nabla u )\cdot\overline{\nabla v} - c^{-2} u \, \overline{v}\Big),
\eeqs
where $\Omega$ be a bounded Lipschitz domain with outward-pointing unit normal vector $n$.
The conormal derivative $\partial_{n,A}u$ is defined for $u\in H^2(\Omega)$ by 
$\partial_{n,A}u := n \cdot (A\nabla u)$; 
recall that $\partial_{n,A} u$ can be defined for $u\in H^1(\Omega)$ with $\cL u\in L^2(\Omega)$ by Green's identity; see, e.g., \cite[Lemma 4.3]{Mc:00}.

Let $\|\cdot\|_{H^1_k}$ be defined by \eqref{eq:1knorm}
and define higher-order weighted Sobolev norms by
\beq\label{eq:weighted_norms}
\N{v}^2_{H^m_k(\domaingen)}:= \sum_{0\leq |\alpha|\leq m}k^{-2|\alpha|}\N{\partial^\alpha v}^2_{L^2(\domaingen)}.
\eeq
The rationale for using these norms is that if a function $v$ oscillates with frequency $k$, then $|(k^{-1}\partial)^\alpha v|\sim  |v|$ for all $\alpha$; this is true, e.g., if $v(\bx) = \exp(\ri k \bx\cdot\ba)$. 
We highlight that many papers on the FEM applied to the Helmholtz equation use the weighted $H^1$ norm $\vertiii{v}^2:=\N{\nabla v}^2_{L^2(\domaingen)} + k^2\N{v}^2_{L^2(\domaingen)}$; we work with \eqref{eq:1knorm}/\eqref{eq:weighted_norms} instead, because weighting the $j$th derivative with $k^{-j}$ is easier to keep track of than weighting the $j$th derivative with $k^{-j+1}$.

\begin{assumption}\label{ass:McLean}
For all $x\in \Omega$, 
$A_{j\ell}(x)= A_{\ell j}(x)$ and 
\beqs
\Re \big( A(x) \xi, \xi\big)_2=\Re\sum_{j=1}^d\sum_{\ell=1}^d A_{j\ell}(x) \xi_\ell \overline{\xi_j} \geq c|\xi|^2 \quad\tfa \xi \in \Com^d.
\eeqs
\end{assumption}

\begin{theorem}[Local elliptic regularity near a Dirichlet or Neumann boundary]\label{thm:erDN}
Let $\Omega$ be a Lipschitz domain and let $G_1, G_2$ be open subsets of $\Rea^d$ with $G_1 \Subset G_2$ and $G_1 \cap \partial \Omega \neq \emptyset$. Let 
\beq\label{eq:Omegaj}
\Omega_j := G_j \cap \Omega, \,\,j=1,2, \quad\tand \quad \Gamma_2:= G_2 \cap \partial\Omega.
\eeq
Suppose that $A$ satisfies Assumption \ref{ass:McLean}, $A, c \in C^{m,1}(\overline{\Omega_2})$, and $\Gamma_2 \in C^{m+1,1}$ for some $m\in \mathbb{N}$.
Given $k_0>0$, there exists $C>0$ such that if $k\geq k_0$, 
 $u\in H^1(\Omega_2)$, $\cL u \in H^m(\Omega_2)$, and either $u=0$ or $\partial_{n,A}u=0$ on $\Gamma_2$, then 
\beq\label{eq:erDN}
\N{u}_{H^{m+2}_k(\Omega_1)} \leq C \big( \N{u}_{H^1_k(\Omega_2)} + \N{\cL u}_{H^m_k(\Omega_2)}\big).
\eeq
\end{theorem}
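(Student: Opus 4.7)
The plan is to prove \eqref{eq:erDN} as a standard local second-order elliptic regularity estimate, with the only new wrinkle being that the norms are the semiclassically weighted ones $H^m_k$. I would work by induction on $m$, using the classical three-step scheme: (i) flatten $\Gamma_2$ by finitely many $C^{m+1,1}$ charts, (ii) use a tangential difference-quotient (or, in the inductive step, a tangential derivative) argument compatible with the Dirichlet or Neumann condition, and (iii) invoke the uniform ellipticity of $A$ (Assumption \ref{ass:McLean}) to solve algebraically from the equation for the purely-normal second derivative. The key observation that makes the $k$-uniformity automatic is that, modulo smooth $k$-independent lower-order terms, $\cL = k^{-2}\cL_0$ where $\cL_0 = -\nabla\cdot(A\nabla\cdot) - k^2 c^{-2}$; with the scaling $\N{\partial^\alpha v}_{L^2} = k^{|\alpha|}(\text{contribution to }\N{v}_{H^{|\alpha|}_k})$, $\cL$ maps $H^{m+2}_k \to H^m_k$ with a $k$-uniform constant.

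For the base case $m=0$, I would choose a cutoff $\chi\in C_c^\infty(G_2)$ with $\chi\equiv 1$ on $G_1$, plus finitely many boundary charts $\Phi_\alpha$ mapping a neighbourhood of $\Gamma_2 \cap \supp\chi$ to a flat half-ball $B^+$. In the flattened picture, $\chi u$ solves a divergence-form equation on $B^+$ with $C^{0,1}$ coefficients and either a Dirichlet condition on $B^0 = \partial B^+\cap\{x_d=0\}$ or a conormal condition, extended by zero to the rest of $B^+$. Testing the equation against tangential difference quotients $D_{-j}^h D_j^h(\chi u)$ (which preserve the zero Dirichlet trace, and in the Neumann case can be handled by an even reflection so the conormal condition is respected) gives control of $\N{\nabla\nabla_T u}_{L^2}$ in terms of the right-hand side of \eqref{eq:erDN}. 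The ellipticity of the tangential block $A_{dd}$ then lets me solve the equation for $\partial_d^2 u$, completing the $H^2_k$ bound.

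For the inductive step $m-1 \Rightarrow m$, I would pick an intermediate set $G_1 \Subset G_{1.5} \Subset G_2$ and apply a tangential derivative $\partial_\tau$ to the equation in the flattened picture. The function $\partial_\tau u$ satisfies a new divergence-form equation with right-hand side in $H^{m-1}_k$ (coming from $\partial_\tau \cL u$ plus commutators $[\partial_\tau,\cL]u$, whose coefficients are $C^{m-1,1}$ and $k$-independent), and it inherits the appropriate boundary condition: for Dirichlet it vanishes on the flat face, while for Neumann the conormal condition is preserved up to curvature-type commutator terms that are absorbed into the lower-order right-hand side. Applying the inductive hypothesis on $(G_1,G_{1.5})$ to $\partial_\tau u$, and combining with the equation-based recovery of pure-normal derivatives, yields the bound at level $m$ on $G_1$.

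The main obstacle, as I see it, is not the existence of regularity but the careful $k$-accounting. Each commutator with a cutoff, a coordinate change, or a tangential derivative produces terms whose $L^2$ norms are not automatically weighted the right way in $H^m_k$; one must verify that every such term contributes either to $\N{\cL u}_{H^m_k(\Omega_2)}$ or to $\N{u}_{H^1_k(\Omega_2)}$ with a $k$-uniform constant, which boils down to checking that differentiating $\chi$ or $\Phi_\alpha$ costs a factor of $1$ (not of $k$) while each application of $\partial$ to $u$ is compensated by a $k^{-1}$ from the $H^\bullet_k$ weight. A secondary technical point is the Neumann case, where the noncommutativity of $\partial_{n,A}$ with tangential derivatives introduces boundary integrals that must be bounded via the trace theorem on $\Gamma_2$ and absorbed using the $C^{m+1,1}$ regularity of $\Gamma_2$; this is the step that fixes the regularity index $\ell$ in Assumption \ref{ass:er}.
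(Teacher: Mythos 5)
Your proposal is correct and coincides with the argument the paper relies on: the paper's proof simply cites the local regularity theorems of McLean \cite[Theorems 4.7 and 4.16]{Mc:00} for the unweighted estimate and notes that the weighted case is proved the same way, and your flattening/tangential-difference-quotient/induction-on-$m$ scheme, with the purely normal second derivative recovered from the equation via ellipticity of $A_{dd}$ and the bookkeeping that each derivative of $u$ carries a $k^{-1}$ weight while derivatives of cutoffs, charts, and coefficients cost $O(1)$, is precisely that standard proof (and is the same scheme the paper writes out in detail for the impedance analogue, Theorem \ref{thm:erimp}). The only detail worth adjusting is the even-reflection remark in the Neumann base case: for variable $A$ the mixed entries $A_{jd}$ need not vanish on the flattened boundary, so the cleaner and standard route is to insert the tangential difference quotients directly as test functions in the conormal variational formulation, which requires no vanishing of the test function on the boundary.
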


\bpf
In unweighted norms, this follows from, e.g., \cite[Theorems 4.7 and 4.16]{Mc:00}; the proof in the weighted norms \eqref{eq:weighted_norms} is very similar.
\epf

\begin{theorem}[Local elliptic regularity for the transmission problem]\label{thm:erT}
Let $\Omega_{\rm in}$ be a Lipschitz domain, and let $\Omega_{\rm out}:= \Rea^d \setminus \overline{\Omega_{\rm in}}$. 
Let $G_1, G_2$ be open subsets of $\Rea^d$ with $G_1 \Subset G_2$ and $G_1 \cap \partial \Omega_{\rm in} \neq \emptyset$. Let 
\beqs
\Omega_{\rm in/out, j}:= G_j \cap \Omega_{\rm in/out},\quad j=1,2, \quad\tand \Gamma_2 := G_{2} \cap \partial \Omega_{\rm in}.
\eeqs
Suppose that $A$ satisfies Assumption \ref{ass:McLean}, $A|_{\Omega_{\rm in/out, 2}}, c|_{{\Omega_{\rm in/out, 2}}}\in C^{m,1}(\overline{\Omega_{\rm in/out, 2})}$, and $\Gamma_2 \in C^{m+1,1}$ for some $m\in \mathbb{N}$.
Given $k_0>0$, there exists $C>0$ such that if $k\geq k_0$, $u_{\rm in/out}\in H^1(\Omega_{\rm in/out})$, $\cL u \in H^m(\Omega_{\rm in/out, 2})$, and
$\uin =\uout$ and $\partial_{n,A}u_{\rm in} =\zeta \partial_{n,A} u_{\rm out}$ on $\Gamma_2$ 
for some $\zeta>0$, then 
\begin{align}\nonumber
&\N{u_{\rm in}}_{H^{m+2}_k(\Omega_{\rm in, 1})}
+\N{u_{\rm out}}_{H^{m+2}_k(\Omega_{\rm out, 1})}\\
&\quad \leq C \Big( \N{\uin}_{H^1_k(\Omega_{\rm in, 2})}
+  \N{\uout}_{H^1_k(\Omega_{\rm out, 2})}
 + \N{\cL\uin}_{H^m_k(\Omega_{\rm in, 2})}
  + \N{\cL\uout}_{H^m_k(\Omega_{\rm out, 2})}\Big).\label{eq:erT}
\end{align}
\end{theorem}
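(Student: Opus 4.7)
The plan is to follow the standard template for elliptic-regularity proofs via flattening the interface and Nirenberg's tangential-difference-quotient method, with the transmission conditions playing here the role played by the Dirichlet/Neumann conditions in Theorem \ref{thm:erDN}. Interior regularity away from $\Gamma_2$ follows from the usual interior elliptic regularity applied on each side separately, so it suffices to prove the bound \eqref{eq:erT} locally near a point $x_0 \in G_1 \cap \partial \Omega_{\rm in}$; the general case follows from a finite cover of $\overline{G_1}$ and a partition of unity.

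First, I would locally flatten $\Gamma_2$ using a $C^{m+1,1}$ diffeomorphism $\Phi$ mapping a neighbourhood of $x_0$ to a ball in which $\Omega_{\rm in}$ corresponds to $\{x_d > 0\}$ and $\Omega_{\rm out}$ to $\{x_d < 0\}$. The pulled-back equations $\cL \widetilde{u}_{\rm in/out} = \widetilde{f}_{\rm in/out}$ have $C^{m,1}$ coefficients preserving the symmetry and ellipticity of Assumption \ref{ass:McLean}; the pulled-back transmission conditions read $\widetilde{u}_{\rm in} = \widetilde{u}_{\rm out}$ and $\partial_{d,\widetilde{A}}\widetilde{u}_{\rm in} = \beta\,\partial_{d,\widetilde{A}}\widetilde{u}_{\rm out}$ on $\{x_d=0\}$. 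The key structural observation is that when one tests the coupled weak formulation against a function $v$ continuous across $\{x_d=0\}$ with weight $1$ on the $+$ side and $\beta^{-1}$ on the $-$ side, the boundary integrals from the two sides cancel exactly by virtue of the Neumann-type transmission condition, so the combined system behaves, in the tangential directions, like a standard elliptic problem with no interface.

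Next I would apply Nirenberg's tangential-difference-quotient method in the $d-1$ tangential directions, testing against appropriate cut-offs that preserve this cross-interface continuity. This yields tangential regularity of $\widetilde{u}_{\rm in/out}$ on each side, and ellipticity of $\cL$ then lets me solve each equation for the pure normal second derivatives $\partial_d^2 \widetilde{u}_{\rm in/out}$, giving $H^2_k$ control on each side. I would then iterate: having established $H^{j+1}$-regularity on each side with $j \leq m+1$, differentiating the equations and the transmission conditions tangentially reduces the next-order step to the previous one, ultimately giving $H^{m+2}_k$ on each side. Carefully tracking the $k$-weightings in the difference-quotient rescalings and in all commutator terms reproduces the weighted bound \eqref{eq:erT}.

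The main obstacle is the bookkeeping of the transmission conditions under iterated tangential differentiation: the Dirichlet-type condition $\widetilde{u}_{\rm in} = \widetilde{u}_{\rm out}$ passes through tangential derivatives trivially, but the Neumann-type condition $\partial_{d,\widetilde{A}}\widetilde{u}_{\rm in} = \beta\,\partial_{d,\widetilde{A}}\widetilde{u}_{\rm out}$ generates, upon tangential differentiation, commutator terms involving tangential derivatives of $\widetilde{A}$ along $\Gamma_2$. Because $A,c\in C^{m,1}$ and $\Gamma_2 \in C^{m+1,1}$, all such commutator and lower-order terms remain in the appropriate $H^j$-space at each step, and the standard argument underlying \cite[Theorems 4.7 and 4.16]{Mc:00} adapts essentially verbatim once the interface has been flattened and the combined weak formulation set up; as in the proof of Theorem \ref{thm:erDN}, the extension from unweighted to $k$-weighted norms is routine.
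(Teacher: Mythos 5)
Your proposal is correct in outline, but it is a genuinely different route from the paper: the paper does not prove Theorem \ref{thm:erT} at all, instead citing \cite[Theorem 5.2.1(i)]{CoDaNi:10} for the unweighted estimate (and \cite[Theorems 4.7 and 4.16]{Mc:00} for $\beta=1$), and remarking that the passage to the $k$-weighted norms \eqref{eq:weighted_norms} is ``very similar''. What you sketch --- interior regularity away from $\Gamma_2$, local flattening by a $C^{m+1,1}$ chart, a combined weak formulation in which the interface terms cancel, Nirenberg tangential difference quotients, recovery of $\partial_d^2$ from the equation via ellipticity of $a^{dd}$, and an induction in the order with commutators of the coefficients absorbed using $A,c\in C^{m,1}$, $\Gamma_2\in C^{m+1,1}$ --- is the standard self-contained proof of exactly this transmission-regularity statement, and it is essentially the induction scheme the paper itself spells out for the impedance case in Theorem \ref{thm:erimp}. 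What your approach buys is a proof from first principles (and a transparent place to track the $k$-weights); what the citation buys is brevity and, in the case of \cite{CoDaNi:10}, a statement already formulated for transmission problems with general $\beta$.

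One bookkeeping slip should be fixed: the cancellation of the interface terms requires the weights to be matched to the condition $\partial_{n,A}\uin=\beta\,\partial_{n,A}\uout$ the other way around from what you wrote. Testing with weight $w_{\rm in}$ on $\Omegain$ and $w_{\rm out}$ on $\Omegaout$ against a test function continuous across the interface produces the boundary term $\int_{\Gamma_2}\big(w_{\rm in}\partial_{n,A}\uin-w_{\rm out}\partial_{n,A}\uout\big)\overline{v}$, which vanishes iff $w_{\rm out}=\beta\,w_{\rm in}$; so you need weight $\beta^{-1}$ on the \emph{in} side and $1$ on the \emph{out} side (this is precisely the weighting in the sesquilinear form \eqref{eq:PML_a}), not weight $1$ on $\{x_d>0\}=\Omegain$ and $\beta^{-1}$ on $\{x_d<0\}=\Omegaout$ as stated. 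With the weights corrected, the combined form remains coercive modulo lower-order terms since $\beta>0$, and the rest of your argument goes through.
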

\bpf
In unweighted norms, this is, e.g.,
\cite[Theorem 5.2.1(i)]{CoDaNi:10}  (and  \cite[Theorems 4.7 and 4.20]{Mc:00} when $\zeta=1$); the proof in the weighted norms \eqref{eq:weighted_norms} is very similar.
\epf

\begin{theorem}[Local elliptic regularity for the impedance problem]\label{thm:erimp}
Let $\Omega$ be a Lipschitz domain and let $G_1, G_2$ be open subsets of $\Rea^d$ with $G_1 \Subset G_2$ and $G_1 \cap \partial \Omega \neq \emptyset$. Let $\Omega_j$ and $\Gamma_2$ be defined by \eqref{eq:Omegaj}.
Suppose that, for some $m\in \mathbb{N}$, $\Gamma_2 \in C^{m+1,1}$. Given $k_0>0$, there exists $C>0$ such that if $k\geq k_0$, $u\in H^1(\Omega_2)$, $\Delta u \in H^m(\Omega_2)$,
and $(k^{-1}\partial_n -\ri)u=0$ on $\Gamma_2$, then 
\beq\label{eq:erimp}
\N{u}_{H^{m+2}_k(\Omega_1)} \leq C \big( \N{u}_{H^1_k(\Omega_2)} + \N{k^{-2}\Delta u}_{H^m_k(\Omega_2)}\big).
\eeq
\end{theorem}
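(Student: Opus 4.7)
The plan is to proceed by bootstrapping from the Neumann regularity result (Theorem \ref{thm:erDN}), using the impedance boundary condition to convert itself into inhomogeneous Neumann data. The impedance condition reads $\partial_n u = \ri k u$ on $\Gamma_2$, so once we already know $u\in H^{s}(\Omega_2)$ for some $s\geq 1$, the trace theorem places $u|_{\Gamma_2}\in H^{s-1/2}(\Gamma_2)$, and hence the Neumann data $\ri k u|_{\Gamma_2}$ is in $H^{s-1/2}(\Gamma_2)$. Feeding this back into a Neumann-regularity theorem (in its inhomogeneous-boundary-data form) gains one more derivative, and we can iterate until we reach $H^{m+2}_k$.

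Concretely, I would argue by induction on $m$. For the base case $m=0$, the result is the standard local $H^2$-regularity bound for the Robin problem: localize with a cutoff $\chi\in C_c^\infty(G_2)$ equal to $1$ on $G_1$, flatten $\Gamma_2$ via a $C^{1,1}$ change of variables, and apply tangential difference quotients $D^h_{\tau}$ on the model half-space. The impedance boundary condition is preserved by $D^h_\tau$ up to a zeroth-order commutator (the $\ri k u$ term is of lower order than $\partial_n u$), so the usual tangential-difference-quotient argument for the Neumann problem goes through almost verbatim; then $\partial_d^2 u$ is reconstructed algebraically from $\Delta u$ and tangential derivatives. This gives
\beqs
\N{u}_{H^2_k(\Omega_1)}\leq C\big(\N{u}_{H^1_k(\Omega_2)}+\N{k^{-2}\Delta u}_{L^2(\Omega_2)}\big).
\eeqs

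For the inductive step, assume the bound holds with $m-1$ on any pair of nested domains. Choose an intermediate open set $\Omega_1\Subset \Omega_{1.5}\Subset \Omega_2$ (obtained from intermediate $G_j$'s). By the induction hypothesis applied to $(\Omega_{1.5},\Omega_2)$, we control $\N{u}_{H^{m+1}_k(\Omega_{1.5})}$ by the right-hand side of \eqref{eq:erimp}. The trace theorem on $\Gamma_2\cap\partial\Omega_{1.5}$ then bounds $\N{\ri k u|_{\Gamma_2}}_{H^{m+1/2}_k(\Gamma_2)}$ by the same quantity (the explicit factor of $k$ combines with the $k^{-1}$ in the weighted trace inequality to give a $k$-independent constant). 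Subtracting off a harmonic lift $w$ of this Neumann data, which is a standard construction satisfying $\N{w}_{H^{m+2}_k}\leq C\N{\ri k u|_{\Gamma_2}}_{H^{m+1/2}_k(\Gamma_2)}$, reduces $u-w$ to a homogeneous-Neumann problem to which Theorem \ref{thm:erDN} directly applies on $(\Omega_1,\Omega_{1.5})$, yielding \eqref{eq:erimp}.

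The main obstacle is bookkeeping with the $k$-weighted norms: because the impedance condition carries an explicit $k$ while the trace theorem and the lifting of boundary data naturally produce factors of $k^{-1/2}$ and $k^{-1}$ in the weighted norms, one has to verify that all powers of $k$ cancel at each step of the induction. This is ultimately why \eqref{eq:erimp} is most cleanly stated in the weighted scale $H^m_k$ and is the reason the constants $C$ can be chosen independent of $k$.
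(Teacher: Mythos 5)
Your proposal is correct in outline, but it takes a genuinely different route from the paper. The paper's proof is an Evans-style induction after flattening the boundary: because the flattened impedance operator is $-k^{-1}\partial_{x_d}-\ri$, tangential derivatives commute with it \emph{exactly}, so the $m=0$ estimate can be applied directly to $k^{-|\alpha|}\partial_x^\alpha u$ for tangential $\alpha$; the missing normal derivatives are then recovered purely algebraically from the equation, dividing by $a^{dd}$ in an inner induction on the number of normal derivatives. You instead rewrite the Robin condition as inhomogeneous Neumann data $\ri k\,u|_{\Gamma_2}$, lift it, and bootstrap through Theorem \ref{thm:erDN}. Your route is sound: the weighted trace bound $\N{u}_{H^{m+1/2}_k(\Gamma_2)}\leq Ck^{1/2}\N{u}_{H^{m+1}_k}$ and a $k$-uniform right inverse of the Neumann trace (which gains $k^{-1/2}$ in the weighted scale once the data is written as $k^{-1}\partial_n w = \ri u|_{\Gamma_2}$) make the powers of $k$ cancel exactly as you claim, and the induction hypothesis supplies the needed $H^{m+1}_k$ control on the intermediate domain. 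Two small remarks: you do not need the lift to be harmonic (a local Neumann problem with prescribed data is awkward; any bounded right inverse of the Neumann trace suffices, since only $\N{k^{-2}\Delta w}_{H^m_k}\leq \N{w}_{H^{m+2}_k}$ is used), and you need to localize the boundary data with a cutoff before extending by zero, which is harmless since the support stays strictly inside $\Gamma_2$. Comparing the two: the paper's argument is more elementary and self-contained, needing only the $m=0$ case and no weighted trace/lifting machinery; yours requires stating and proving the $k$-explicit inverse-trace estimate, but it does not rely on the boundary operator commuting with tangential derivatives, so it adapts more readily to boundary conditions such as the DtN map or higher-order impedance conditions --- precisely the situation the paper flags in Remark \ref{rem:DtN} as requiring estimates with nontrivial boundary data.
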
 

\begin{proof}
When $m=0$, the result can be obtained from 
\cite[Corollary 4.2/Theorem 4.3]{ChNiTo:20} by multiplying by $k^{-2}$ to switch to weighted norms, and using that the trace operator has norm bounded by $Ck^{1/2}$ from $H^1_k$ to $L^2$ (which can be obtained from, e.g., \cite[Theorem 5.6.4]{Ne:01} since the weighted norms there are, up to a constant, the weighted norms \eqref{eq:1knorm}).

The proof that~\eqref{eq:erimp} follows for $m>0$ is then standard and can be found e.g. in~\cite[\S6.3.2, Theorem 5]{Ev:98}. We repeat it here in the context of impedance boundary conditions for completeness.

We now prove that if the bound holds for $m=q$, then it holds for $m=q+1$ (assuming the appropriate regularity of the coefficients and the domain). 
Without loss of generality, we can change coordinates and work with $U:=B(0,s)\cap \{x_d>0\}$ and $V:=B(0,t)\cap \{x_d>0\}$ for some $0<t<s$. In these coordinates
$$
\widetilde{\cL} u:=(-k^{-2}a^{ij}\partial_{x_i}\partial_{x_j}-k^{-2}(b^i\partial_{x_i}-c))u=f,\qquad (-k^{-1}\partial_{x_d}-\ri)u=0\text{ on }\{x_d=0\}\cap \overline{U}.
$$
Suppose that for some $q\geq 0$, for any $0<t<s$, 
\beq\label{eq:induction0}
\N{u}_{H_k^{q+2}(V)}\leq C_t\big(\N{u}_{L^2(U)}+\N{f}_{H_k^q(U)}\big).
\eeq
Now suppose that $f\in H_k^{q+1}(U)$ and $a,b,c\in C^{q+1,1}(\overline{U})$, 
and let $W:=B(0,r)\cap \{x_d>0\}$ with $t<r<s$. By \eqref{eq:induction0},
\beq\label{eq:induction1}
\N{u}_{H_k^{q+2}(W)}\leq C\big(\N{u}_{L^2(U)}+\N{f}_{H_k^q(U)}\big),
\eeq
and, by interior elliptic regularity, $u\in H^{q+3}_{\loc}(U)$. 

The next step is to bound tangential derivatives of $u$:~let $\alpha$ be a multiindex with $|\alpha|=q+1$ and $\alpha_d=0$ (so that $\partial_x^\alpha$ is a tangential derivative). Let 
\beqs
\widetilde{f} := \widetilde{\cL} \big(k^{-|\alpha|}\partial_x^\alpha u\big) \quad\text{ so that }\quad
\widetilde{f}  =  [\widetilde{\cL},k^{-|\alpha|}\partial_x^\alpha]u+ k^{-|\alpha|}\partial_x^\alpha f
\eeqs
(where $[A,B]:=AB-BA$). 
Therefore,
\beq\label{eq:tildef}
 \|\widetilde{f}\|_{L^2(W)}\leq 
 C\big(\N{u}_{H^{q+2}_{k}(W)}+\N{f}_{H_k^{q+1}(W)}\big)
\leq C\big(\N{u}_{L^2(U)}+\N{f}_{H_k^{q+1}(U)}\big).
 \eeq
where  to obtain the last inequality we have used  \eqref{eq:induction1} and the fact that the coefficients of $\widetilde{\cL}$ are $C^{q+1,1}(\overline{U})$.
 Furthermore $k^{-|\alpha|}\partial_x^\alpha u$ satisfies the impedance boundary condition, since
\beq\label{eq:commute}
(-k^{-1}\partial_{x_d}-\ri)k^{-|\alpha|}\partial_x^\alpha u|_{x_d=0}=k^{-|\alpha|}\partial_x^\alpha\big[(-k^{-1}\partial_{x_d}-\ri )u|_{x_d=0}\big]=0.
\eeq
Therefore, by the analogue of \eqref{eq:induction0} with $q=0$ and $U$ replaced by $W$, \eqref{eq:induction1}, and \eqref{eq:tildef},
\begin{align*}
\big\|k^{-|\alpha|}\partial_x^\alpha u\big\|_{H_k^2(V)}&\leq C\big(\big\|k^{-|\alpha|}\partial_x^\alpha u\big\|_{L^2(W)}
+\big\|\widetilde{f}\big\|_{L^2(W)}\big)\leq C\big(\N{u}_{L^2(U)}+\N{f}_{H_k^{q+1}(U)}\big).
\end{align*}
In summary, recalling the definition of $\alpha$, we have proved that
\begin{align}\nonumber
&\big\|k^{-|\beta|}\partial_x^\beta u\big\|_{L^2(V)}\leq C\big(\N{u}_{L^2(U)}+\N{f}_{H_k^{q+1}(U)}\big)
\text{ for all $|\beta|= q+3$ with $\beta_d\in\{0,1,2\}$.}\label{eq:beta_bound_basic}
\end{align}
To prove that the bound \eqref{eq:induction0} holds with $q$ replaced by $q+1$, i.e.,
$$
\N{u}_{H_k^{q+3}(V)}\leq C\big(\N{u}_{L^2(U)}+\|f\|_{H_k^{q+1}(U)}\big),
$$
it is sufficient to prove that 
\begin{align*}\nonumber
&\big\|k^{-|\beta|}\partial_x^\beta u\big\|_{L^2(V)}\leq C\big(\N{u}_{L^2(U)}+\N{f}_{H_k^{q+1}(U)}\big)\\
&\hspace{2cm}\text{for all $|\beta|= q+3$ with $\beta_d\in\{0,\dots, q+3\}$.}
\end{align*}
We therefore now prove by induction that if 
\beq\label{eq:beta_bound_1}
\big\|k^{-|\beta|}\partial_x^\beta u\big\|_{L^2(V)}\leq C\big(\N{u}_{L^2(U)}+\N{f}_{H_k^{q+1}(U)}\big)
\eeq
for any $|\beta|=q+3$ with $\beta_d\in\{0,\dots, j\}$ for some $j\in \{2,\dots, q+2\}$, then \eqref{eq:beta_bound_1}  holds 
for $|\beta|=q+3$ with $\beta_d=j+1$. Combined with \eqref{eq:beta_bound_basic}, this completes the proof.

We therefore assume that $|\beta|=q+3$ with $\beta_d=j+1$. 
Then, putting $\beta=\gamma+\delta$ with $\delta=(0,\dots,0,2)$ and $|\gamma|=q+1$ (so that $\gamma_d = j-1$), and using that $u\in H^{q+3}_{\loc}(U)$, we have
\beq\label{eq:piano}
k^{-|\gamma|}\partial^\gamma \widetilde{\cL} u=a^{dd}k^{-|\beta|}\partial ^\beta u+ Bu \quad \tin V,
\eeq
where
$$
Bu=\sum_{\substack{|\alpha|\leq q+3,\,\alpha_d\leq j}}b_\alpha k^{-|\alpha|}\partial_x^\alpha u
$$
for appropriate $b_\alpha$. By the induction hypothesis \eqref{eq:beta_bound_1},
\beqs
\N{Bu}_{L^2(V)}\leq C\big(\N{u}_{L^2(U)}+\N{f}_{H_k^{q+1}(U)}\big).
\eeqs
Dividing \eqref{eq:piano} by $a^{dd}$, taking the $L^2(V)$ norm, and using that 
$1/a^{dd}$ is bounded, we have 
$$
\|k^{-|\beta|}\partial^\beta u\|_{L^2(V)}\leq C\big(\N{u}_{L^2(U)}+\|f\|_{H_k^{q+1}(U)}\big);
$$
i.e., we have proved that \eqref{eq:beta_bound_1} holds for $|\beta|=q+3$ with $\beta_d=j+1$, and the proof is complete.
\end{proof}

Let 
$\DtN: H^{1/2}(\GR)\to H^{-1/2}(\GR)$ be the Dirichlet-to-Neumann map, $u\mapsto k^{-1} \partial_r u$, for the Helmholtz equation $(k^{-2}\Delta +1)u=0$ posed in the exterior of $B_R$ and satisfying the Sommerfeld radiation condition 
 \beq\label{eq:src}
k^{-1} \pdiff{u}{r}(x) - \ri  u(x) = o \Big(\frac{1}{r^{(d-1)/2}}\Big) \quad\tas r:= |x|\tendi, \text{ uniformly in $\widehat{x}:= x/r$}.
 \eeq
For explicit expressions for $\DtN$ in terms of spherical harmonics and Hankel and Bessel functions, see, 
e.g., \cite[Equations 3.7 and 3.10]{MeSa:10}. 

\begin{theorem}[Elliptic regularity for the Laplacian on a ball with boundary condition involving $\DtN$]\label{thm:erDtN}
Given $R, k_0>0$, there exists $C>0$ such that the following is true for all $k\geq k_0$. Let  $B_R:= \{ x : |x|< R\}$ and 
suppose that $u\in H^1(B_R)$, $\Delta u \in H^m(B_R)$,
and $(k^{-1}\partial_n - T)u=0$ on $\partial B_R$,
where $T$ is one of 
\beq\label{eq:choiceT}
\DtN, \quad \DtN^*, \quad \tand \quad (\DtN + \DtN^*)/2.
\eeq
 Then 
\beq\label{eq:erDtN}
\N{u}_{H^{m+2}_k(B_R)} \leq C \big( \N{u}_{L^2(B_R)} + \N{k^{-2}\Delta u}_{H^m_k(B_R)}\big).
\eeq
\end{theorem} 

\begin{proof}
When $m=0$ and $T=\DtN$, the bound \eqref{eq:erDtN} is contained in \cite[Lemma 6.4]{LSW2}. The only specific property of $\DtN$ used in the proof is that 
\beq\label{eq:Nedelec}
-\Re \langle \DtN \phi,\phi \rangle \geq 0 \quad \tfa \phi \in H^{1/2}(\partial B_R)
\eeq
(see \cite[Theorem 2.6.4]{Ne:01}, \cite[Lemma 2.1]{ChMo:08}); thus \eqref{eq:erDtN} holds for $m=0$ and the other two choices of $T$ in \eqref{eq:choiceT}. Furthermore, 
applying \eqref{eq:erDtN} with $m=0$ to $\varphi u$ with $\varphi \in C^\infty(\Rea^d)$, 
$\varphi \equiv 0$ on $B_{R_2}$, and  
$\varphi \equiv 1$ on $(B_{R_3})^c$, 
with  $R_1<R_2<R_3<R$, we find that 
\beq\label{eq:train1}
\|u\|_{H_k^2(B_R\setminus B_{R_3})}\leq C\Big(\|u\|_{L^2(B_R\setminus B_{R_2})}+\|k^{-2}\Delta u\|_{L^2(B_R\setminus B_{R_2})}+k^{-1}\|u\|_{H^1_k(B_R\setminus B_{R_2})}\Big). 
\eeq
Let $\widetilde{\varphi} \in C^\infty(\Rea^d)$ be such that $\widetilde{\varphi}\equiv 1$
on $(B_{R_2})^c$, $\widetilde{\varphi}\equiv 0$ on $B_{R_1}$, and $|\nabla\widetilde{\varphi}|^2/|\widetilde{\varphi}|$ is bounded (this last condition can be achieved by making $\widetilde{\varphi}$ vanish quadratically at $\partial B_{R_1}$).
Applying Green's identity to $u$ and $\widetilde{\varphi} u$, and using \eqref{eq:Nedelec}, we find that
\beqs
\int_{B_R} \widetilde{\varphi} |k^{-1}\nabla u|^2 \leq -\Re\bigg( \int_{B_R} \overline{u} (k^{-1}\nabla \widetilde{\varphi}) \cdot (k^{-1}\nabla u)
+\widetilde{\varphi} \overline{u} (k^{-2}\Delta u )
 \bigg).
\eeqs
Using the Cauchy--Schwarz inequality and the inequality $2ab \leq \epsilon a^2 + \epsilon^{-1}b^2$ for all $a,b,\epsilon>0$, we obtain that 
\beq\label{eq:train2}
\|k^{-1}\nabla u\|_{L^2(B_R\setminus B_{R_2})}\leq C\big(\|u\|_{L^2(B_R\setminus B_{R_1})}+\|k^{-2}\Delta u\|_{L^2(B_R\setminus B_{R_1})}\big).
\eeq
Combining \eqref{eq:train1} and \eqref{eq:train2}, we obtain that 
\begin{equation}
\label{e:local}
\|u\|_{H_k^2(B_R\setminus B_{R_3})}\leq C\big(\|u\|_{L^2(B_R\setminus B_{R_1})}+\|k^{-2}\Delta u\|_{L^2(B_R\setminus B_{R_1})}\big). 
\end{equation}

We now repeat the argument for increasing $m$ used in Theorem~\ref{thm:erimp} except that we work in an annulus around $\partial B_R$ in polar coordinates. Without loss of generality $R>2$, so that, for $u\in C_{\rm comp}^\infty (B(0,R+1)\setminus \overline{B(0,R-1)}$, 
$$
-k^{-2}\Delta u = \Big(-k^{-2}\partial_r^2 -\frac{d-1}{k r}k^{-1}\partial_r -r^{-2}k^{-2}\Delta_\omega\Big)u,
$$
where $ [R-1,R+1]\times S^{d-1}\ni (r,\omega)\mapsto r\omega\in \mathbb{R}^d$ and $\Delta_\omega$ denotes the Laplacian on $S^{d-1}$.  
Now, $\DtN$ and $\Delta_\omega|_{\partial B_R}$  commute:~this can be seen, e.g., from the expression for $\DtN$ in terms of spherical harmonics on $\partial B_R$. 

Let $\chi \in C_{\rm comp}^\infty(B(0,R+1)\setminus \overline{B(0,R-1)})$ with $\chi \equiv 1$ near $\partial B_R$ and put $v=\chi u$ so that 
$$
-\Delta v = [-\Delta, \chi]u-\chi \Delta u
$$
Then,
$$
(-\Delta_\omega)^j (-\Delta v)=(-\Delta_\omega)^j\big([-\Delta, \chi]u-\chi \Delta u\big), \qquad (k^{-1}\partial_r-T)(-\Delta_\omega)^j v|_{\partial B_R}=0.
$$
Since $-\Delta_\omega$ commutes with $-\Delta$, 
$$
-\Delta (-\Delta_\omega)^j v=(-\Delta_\omega)^j (-\Delta v)=(-\Delta_\omega)^j\big([-\Delta, \chi]u-\chi \Delta u\big).
$$
In particular, letting $R_1<R_3<R$,  such that $B_R\setminus B_{R_1} \subset \{\chi \equiv 1\}$,~\eqref{e:local} implies
\begin{align*}
\|k^{-2j}(-\Delta_\omega)^j v\|_{H_k^{2}(B_R\setminus B_{R_3})}&\leq C\|k^{-2j}(-\Delta_\omega)^ju\|_{L^2(B_R\setminus B_{R_1})}
\\
&\hspace{2cm}
+\|k^{-2j}(-\Delta_\omega)^j k^{-2}\Delta u\|_{L^2(B_R\setminus B_{R_1})}\\
&\leq C\|k^{-2j}(-\Delta_\omega)^ju\|_{L^2(B_R\setminus B_{R_1})}+C\|k^{-2}\Delta u\|_{H_k^{2j}(B_R)}.
\end{align*}
Using that 
$$C\|k^{-2j}(-\Delta_\omega)^ju\|_{L^2(B_R\setminus B_{R_1})}\leq C\|k^{-2(j-1)}(-\Delta_\omega )^{j-1}u\|_{H_k^2(B_R\setminus B_{R_1})}$$
 together with the $m=0$ case and induction on $j$, we obtain for any $R>R_4>R_3$ that
$$
\|k^{-2j}(-\Delta_\omega)^j v\|_{H_k^{2}(B_R\setminus B_{R_4})}\leq C\|u\|_{L^2(B_R)}+C\|k^{-2}\Delta u\|_{H_k^{2j}(B_R)}.
$$

Applying elliptic regularity of $(-\Delta_\omega)$ for each fixed $R_4\leq r\leq R$, we obtain for any $|\alpha|+\ell\leq 2j+2$ and $\ell\in \{0,1,2\}$ that
$$
\|k^{-|\alpha|-\ell}\partial_\omega^\alpha\partial_r^\ell v\|_{L^2(B_R\setminus B_{R_4})}\leq C\|u\|_{L^2(B_R)}+C\|k^{-2}\Delta u\|_{H_k^{2j}(B_R)}.
$$
We then proceed as in the proof of Theorem~\ref{thm:erimp} starting from~\eqref{eq:beta_bound_1} to complete the proof of~\eqref{eq:erDtN}.
\epf

\section{Theorem \ref{thm:ep_abs}  applied to the PML problem}\label{sec:PML}

\subsection{Definition of the PML problem}\label{sec:PML_def}

\paragraph{Obstacles and coefficients for Dirichlet/Neumann/penetrable obstacle problem}

Let $\Ot,\OI\subset B_{R_0}:= \{ x : |x| < R_0\}\subset \Rea^d$, $d=2,3$, be bounded open sets with Lipschitz boundaries, $\Gt$ and $\GI$, respectively, such that $\Gt\cap \GI=\emptyset$, and $\Rea^d\overline{\setminus \OI}$ is connected. Let $\Omegaout:=\Rea^d\overline{\setminus \OI\cup \Ot}$ and $\Omegain:=(\Rea^d\overline{\setminus\OI})\cap \Ot$.

Let  $\Ascatout \in C^{0,1} (\Omegaout , \Rea^{d\times d})$ 
and $\Ascatin \in C^{0,1}(\Omegain, \Rea^{d\times d})$ 
be symmetric positive definite, let $\cscatout \in L^\infty(\Omegaout;\Rea)$, $\cscatin \in L^\infty(\Omegain;\Rea)$ 
be strictly positive, and let $\Ascatout$ and $\cscatout$ be such that 
 there exists $R_{\rm scat}>R_0>0$ such that 
\beqs
\overline{\Omega_-} \cup {\rm supp}(I- \Ascatout) \cup {\rm supp}(1-\cscatout) \Subset B_{R_{\rm scat}}.
\eeqs

The obstacle $\Omega_-$ is the impenetrable obstacle, on which we impose either a zero Dirichlet or a zero Neumann condition, and the obstacle $\Omegain$ is the penetrable obstacle, across whose boundary we impose transmission conditions. 

For simplicity, we do not cover the case when 
$\Omega_-$ is disconnected,  with Dirichlet boundary conditions on some connected components and Neumann boundary conditions on others, but the main results hold for this problem too (at the cost of introducing more notation).

\paragraph{Definition of the radial PML}

Let $\Rtr >\RPMLo>R_{\rm scat}$ and let $\Omega_{\tr}\subset \mathbb{R}^d$ be a bounded Lipschitz open set with $B_{\Rtr }\subset \Omega_{\tr} \subset  B_{C\Rtr }$ for some $C>0$ (i.e., $\Omega_{\tr}$ has characteristic length scale $\Rtr $).
Let $\Omega:=\Omega_{\tr}\cap (\Omegain \cup \Omegaout)$ and $\Gamma_{\tr}:=\partial\Omega_{\tr}$.
For $0\leq \theta<\pi/2$, let the PML scaling function $f_\theta\in C^{3}([0,\infty);\mathbb{R})$ be defined by $f_\theta(r):=f(r)\tan\theta$ for some $f$ satisfying
\begin{equation}
\label{e:fProp}
\begin{gathered}
\big\{f(r)=0\big\}=\big\{f'(r)=0\big\}=\big\{r\leq \RPMLo\big\},\quad f'(r)\geq 0,\quad f(r)\equiv r \text{ on }r\geq \RPMLt;
\end{gathered}
\end{equation}
i.e., the scaling ``turns on'' at $r=\RPMLo$, and is linear when $r\geq \RPMLt$. 
We note that $\Rtr $ can be $<\RPMLt$, i.e., we allow truncation before linear scaling is reached. 
Given $f_\theta(r)$, let 
\beq\label{eq:alpha_beta}
\alpha(r) := 1 + \ri f_\theta'(r) \quad \tand\quad \beta(r) := 1 + \ri f_\theta(r)/r.
\eeq
and let
\beq\label{eq:Ac}
A := 
\begin{cases}
\Ascatin 
\hspace{-1ex}
& \tin \Omegain,\\
\Ascatout 
\hspace{-1ex}
& \tin \Omegaout \cap B_{\RPMLo},\\
HDH^T 
\hspace{-1ex}
&\tin (B_{\RPMLo})^c
\end{cases}
\tand
\frac{1}{c^2} := 
\begin{cases}
\cscatin^{-2} 
\hspace{-1ex}
& \tin \Omegain,\\
\cscatout^{-2} 
\hspace{-1ex}
& \tin \Omegaout \cap B_{\RPMLo},\\
\alpha(r) \beta(r)^{d-1} 
\hspace{-1ex}
&\tin (B_{\RPMLo})^c,
\end{cases}
\eeq
where, in polar coordinates $(r,\varphi)$,
\beq\label{eq:DH2}
D =
\left(
\begin{array}{cc}
\beta(r)\alpha(r)^{-1} &0 \\
0 & \alpha(r) \beta(r)^{-1}
\end{array}
\right) 
\quad\tand\quad
H =
\left(
\begin{array}{cc}
\cos \varphi & - \sin\varphi \\
\sin \varphi & \cos\varphi
\end{array}
\right) 
\tfor d=2,
\eeq
and, in spherical polar coordinates $(r,\varphi, \phi)$,
\beq\label{eq:DH3}
D =
\left(
\begin{array}{ccc}
\beta(r)^2\alpha(r)^{-1} &0 &0\\
0 & \alpha(r) &0 \\
0 & 0 &\alpha(r)
\end{array}
\right) 
\tand
H =
\left(
\begin{array}{ccc}
\sin \varphi \cos\phi & \cos \varphi \cos\phi & - \sin \phi \\
\sin \varphi \sin\phi & \cos \varphi \sin\phi & \cos \phi \\
\cos \varphi & - \sin \varphi & 0 
\end{array}
\right) 
\eeq
for $d=3$ 
(observe that then $\Ascatout=I$ and $\cscatout^{-2}=1$ when $r=\RPMLo$ and thus $A$ and $c^{-2}$ are continuous at $r=\RPMLo$).

We highlight that, in other papers on PMLs, the scaled variable, which in our case is $r+\ri f_\theta(r)$, is often written as $r(1+ \ri \widetilde{\sigma}(r))$ with $\widetilde{\sigma}(r)= \sigma_0$ for $r$ sufficiently large; see, e.g., \cite[\S4]{HoScZs:03}, \cite[\S2]{BrPa:07}. Therefore, to convert from our notation, set $\widetilde{\sigma}(r)= f_\theta(r)/r$ and $\sigma_0= \tan\theta$.

Let 
\beq\label{eq:cH_PML}
\cH:=H^1_0(\Omega)  \quad \text{ or } \quad \{ v \in H^1(\Omega) \,:\, v=0 \text{ on } \Gamma_\tr\},
\eeq
with the former corresponding to zero Dirichlet boundary conditions on $\Omega_-$ and the latter corresponding to zero Neumann boundary conditions on $\Omega_-$.

\begin{definition}[A variational formulation of the PML problem]
Given $G\in \cH^*$ and $\zeta>0$, 
\beq\label{eq:PML_vf}
\text{ find } u \in \cH \,\tst\, a(u,v) = G(v) \,\tfa v \in \cH,
\eeq
where 
\beq\label{eq:PML_a}
a(u,v) := 
\left(\int_{\Omega\cap \Omegaout}  + \frac{1}{\zeta}\int_{\Omegain} \right)
\Big(k^{-2}(A\nabla u) \cdot\overline{\nabla v} - c^{-2} u \overline{v}\Big).
\eeq
\end{definition}

When 
\beq\label{eq:Gg}
G(v):= 
\left(\int_{B_{\RPMLo}\cap \Omegaout}  + \frac{1}{\zeta}\int_{\Omegain} \right)
c^{-2}g \overline{v}
\eeq
for $g\in L^2(\Omega)$ with $\supp \, g\subset B_{\RPMLo}$, 
the variational problem \eqref{eq:PML_vf} is a weak form of the problem
\beq\label{eq:olddognewtricks}
\begin{aligned}
 k^{-2}\cscatout^{2}\nabla\cdot (\Ascatout\nabla \uout) + \uout &=- g \quad\tin \Omegaout,\\
 k^{-2}\cscatin^{2}\nabla\cdot (\Ascatin\nabla \uin) + \uin &=- g \quad\tin \Omegain,\\
&\hspace{-4cm}\uin= \uout \quad\tand\quad\partial_{n, \Ascatin} \uin = \zeta\partial_{n,\Ascatout} \uout \quad\ton \Gt, \\
&\hspace{-4cm}\text{ either } \quad\uin = 0 \quad\text{ or }\quad \partial_{n,\Ascatin}\uin=0 \quad\ton \GI, 
\end{aligned}
\eeq
and with the Sommerfeld radiation condition approximated by a radial PML
(\eqref{eq:PML_vf} is obtained by multiplying the PDEs above by $c^{-2}_{\rm in/out} \alpha \beta^{d-1}$ and integrating by parts).

Using the fact that the solution of the true scattering problem exists and is unique with $\Ascatout,\Ascatin,\cscatout,\cscatin, \Omega_-,$ and $\Omegain$ described above (see, e.g., the discussion and references in \cite[\S1]{GrPeSp:19}),
the solution of \eqref{eq:PML_vf} exists and is unique (i) for fixed $k$ and sufficiently large $\Rtr-R_1$ by \cite[Theorem 2.1]{LaSo:98}, \cite[Theorem A]{LaSo:01}, \cite[Theorem 5.8]{HoScZs:03} and (ii) for fixed $\Rtr>R_1$ and sufficiently large $k$ by \cite[Theorem 1.5]{GLS2}.

For the particular data $G$ \eqref{eq:Gg}, it is well-known that, for fixed $k$, the error $\|u-v\|_{H^1_k(B_{\RPMLo}\setminus \Omega)}$ decays exponentially in $R_{\rm tr}-\RPMLo$ and $\tan\theta$; see 
\cite[Theorem 2.1]{LaSo:98}, \cite[Theorem A]{LaSo:01}, \cite[Theorem 5.8]{HoScZs:03}.
It was recently proved in \cite[Theorems 1.2 and 1.5]{GLS2} that the error $\|u-v\|_{H^1_k(B_{\RPMLo}\setminus \Omega)}$ also decreases exponentially in $k$.

\subsection{Showing that the PML problem fits in the abstract framework used in Theorem \ref{thm:ep_abs}}\label{sec:show_PML}

Recall that $\cH$ is defined by \eqref{eq:cH_PML} and let $\cH_0= L^2(\Omega)$. 
We work with the norm $\|\cdot\|_{H^1_k(\Omega)}$ \eqref{eq:1knorm} on $\cH$.

We first check that the sesquilinear form $a$ \eqref{eq:PML_a} is continuous and satisfies a G\aa rding inequality, with constants uniform for $\epsilon \leq \theta \leq \pi/2-\epsilon$. 

\ble[Bounds on the coefficients $A$ and $c$]\label{lem:coeff_bounds}
Given $A$ and $c$ as in \eqref{eq:Ac}, a scaling function $f(r)$ satisfying \eqref{e:fProp}, and $\epsilon>0$ there exist $\Amax$ and $\cmin$ such that, for all $\epsilon \leq \theta \leq \pi/2-\epsilon$, $x \in \Omega$, and $\xi,\zeta \in \mathbb{C}^d$,
\beqs
|(A(x)\xi,\zeta)_2|\leq \Amax \|\xi\|_2 \|\zeta\|_2
\quad\tand\quad
\frac{1}{|c(x)|^2} \geq \frac{1}{\cmin^2}.
\eeqs
\ele

\bpf
This follows from the definitions of $A$ and $c$ in \eqref{eq:Ac}, the definitions of $\alpha$ and $\beta$ in \eqref{eq:alpha_beta}, and the fact that $f_\theta(r):= f(r)\tan\theta$.
\epf

Continuity of $a$ \eqref{eq:contAb} with 
$\Ccont := \max\{ \Amax, \cmin^{-2} \}$ then follows from 
the  Cauchy-Schwarz inequality and the definition of $\|\cdot\|_{H^1_k(\Omega)}$ \eqref{eq:1knorm}.

\begin{assumption}\label{ass:PML}
When $d=3$, $f_\theta(r)/r$ is nondecreasing.
\end{assumption}

Assumption \ref{ass:PML} is standard in the literature; e.g., in the alternative notation described above it is that 
$\widetilde{\sigma}$ is non-decreasing -- see \cite[\S2]{BrPa:07}.

\bre\label{rem:JeffPML}
As noted above, the variational problem \eqref{eq:PML_vf} is obtained by multiplying the PDEs in \eqref{eq:olddognewtricks} by $c^{-2}_{\rm in/out} \alpha \beta^{d-1}$ and integrating by parts (as in \cite[\S3]{CoMo:98a}). 
If one integrates by parts the PDEs directly (as in, e.g., \cite[Lemma 4.2 and Equation 4.8]{HoScZs:03}), the resulting sesquilinear form satisfies Assumption \ref{ass:er} after multiplication by $\re^{\ri \omega}$, for some suitable $\omega$ (see Remark \ref{rem:omega}), without the need for Assumption \ref{ass:PML}.
\ere

\begin{lemma}\label{lem:strong_elliptic}
Suppose that $f_\theta$ satisfies Assumption \ref{ass:PML}.
With $A$ defined by \eqref{eq:Ac}, given $\epsilon>0$
there exists $\Amin>0$ such that, 
for all $\epsilon \leq \theta\leq \pi/2-\epsilon$,
\beqs
\Re \big( A(x) \xi, \xi\big)_2 \geq \Amin \|\xi\|_2^2 \quad\tfa \xi \in \mathbb{C}^d \tand x \in \Omega.
\eeqs
\end{lemma}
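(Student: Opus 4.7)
The plan is to split into the three regions defined by \eqref{eq:Ac} and handle each separately, with the bulk of the work in the PML region $(B_{\RPMLo})^c$. On $\Omegain$ and $\Omegaout\cap B_{\RPMLo}$, $A$ equals $\Ascatin$ or $\Ascatout$, which are real symmetric and pointwise positive definite by assumption, so there the bound holds with $\Amin$ equal to the infimum of their smallest eigenvalues (this infimum is positive since both matrices are continuous on compact sets and uniformly positive definite by hypothesis).

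On the PML region, the key observation is that the matrix $H$ in \eqref{eq:DH2}, \eqref{eq:DH3} is a \emph{real} orthogonal matrix (a rotation into the polar/spherical frame). Hence for any $\xi\in\mathbb{C}^d$ we have $\|H^T\xi\|_2=\|\xi\|_2$, and setting $\eta:=H^T\xi$,
\begin{equation*}
\Re(HDH^T\xi,\xi)_2 = \Re(D\eta,\eta)_2.
\end{equation*}
Thus it suffices to bound the real parts of the diagonal entries of $D$ from below, uniformly in $r\geq\RPMLo$ and $\theta\in[\epsilon,\pi/2-\epsilon]$.

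I would then compute these real parts using $\alpha=1+\ri f_\theta'$ and $\beta=1+\ri f_\theta/r$ (with $f_\theta\geq 0$ and $f_\theta'\geq 0$ by \eqref{e:fProp}). A direct conjugate computation gives
\begin{equation*}
\Re\frac{\beta}{\alpha}=\frac{1+f_\theta f_\theta'/r}{1+(f_\theta')^2},\qquad \Re\frac{\alpha}{\beta}=\frac{1+f_\theta f_\theta'/r}{1+(f_\theta/r)^2},
\end{equation*}
both of which are $\geq 1/(1+M^2)$ where $M$ is a uniform upper bound on $f_\theta'$ and on $f_\theta/r$. Such a bound exists because $f\in C^3$ with $f\equiv r$ for $r\geq\RPMLt$ forces $\|f'\|_\infty$ and $\sup_{r>0}f(r)/r$ to be finite, and $\tan\theta\leq\tan(\pi/2-\epsilon)$. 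This handles the case $d=2$ (where $D=\operatorname{diag}(\beta/\alpha,\alpha/\beta)$).

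The main obstacle, and the reason Assumption \ref{ass:PML} is imposed, is the entry $\beta^2/\alpha$ appearing in the $d=3$ matrix \eqref{eq:DH3}. Expanding,
\begin{equation*}
\Re\frac{\beta^2}{\alpha}=\frac{1-(f_\theta/r)^2+2f_\theta f_\theta'/r}{1+(f_\theta')^2}=\frac{1+(f_\theta/r)\bigl(2f_\theta'-f_\theta/r\bigr)}{1+(f_\theta')^2},
\end{equation*}
and here the naive non-negativity argument fails because of the $-(f_\theta/r)^2$ term. However, Assumption \ref{ass:PML} states that $f_\theta(r)/r$ is nondecreasing, i.e.\ $(f_\theta/r)'\geq 0$, which is equivalent to $f_\theta'\geq f_\theta/r$. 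Consequently $2f_\theta'-f_\theta/r\geq f_\theta'\geq 0$, so the numerator is $\geq 1$ and $\Re(\beta^2/\alpha)\geq 1/(1+M^2)$ uniformly. The remaining diagonal entries in $d=3$ are just $\alpha$, and $\Re\alpha=1$. Combining the three regions and taking $\Amin$ to be the minimum of the resulting lower bounds yields the stated inequality, uniform in $\theta\in[\epsilon,\pi/2-\epsilon]$.
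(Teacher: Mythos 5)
Your argument is correct and complete: reducing to the diagonal matrix $D$ via the real orthogonal factor $H$, computing $\Re(\beta/\alpha)$, $\Re(\alpha/\beta)$, $\Re\alpha$, and $\Re(\beta^2/\alpha)$, and invoking Assumption \ref{ass:PML} precisely to control the $\beta^2/\alpha$ entry in $d=3$ is exactly the standard computation behind this lemma. The paper itself gives no details and simply cites \cite[Lemma 2.3]{GLSW1}, whose proof proceeds along essentially the same lines as yours, so there is nothing to add.
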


\bpf[Reference for the proof]
See, e.g., \cite[Lemma 2.3]{GLSW1}.
\epf

\begin{corollary}
\label{cor:Garding}
If $f_\theta$ satisfies Assumption \ref{ass:PML} then
\beqs
\Re a(w,w) \geq \Amin \|w\|^2_{H^1_k(\Omega)} - \big(\Amin + c^{-2}_{\min}\big) \|w\|^2_{L^2(\Omega)} \quad\tfa w\in \cH.
\eeqs
\end{corollary}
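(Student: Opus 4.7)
The plan is to prove the bound pointwise (after taking real parts) and then integrate. Unpacking the weighted norm $\|w\|^2_{H^1_k(\Omega)} = k^{-2}\|\nabla w\|^2_{L^2(\Omega)} + \|w\|^2_{L^2(\Omega)}$, the claimed inequality is equivalent to
\begin{equation*}
\Re a(w,w) \geq \Amin k^{-2}\|\nabla w\|^2_{L^2(\Omega)} - \cmin^{-2}\|w\|^2_{L^2(\Omega)}.
\end{equation*}
Working from the definition of $a$ in \eqref{eq:PML_a}, I would split the integrand into gradient and zeroth-order parts and into the regions $\Omega\cap \Omegaout$ and $\Omega\cap \Omegain$, take real parts termwise, and bound each piece pointwise.

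For the gradient term, the key ingredient is Lemma \ref{lem:strong_elliptic}, which under Assumption \ref{ass:PML} yields the pointwise strong ellipticity $\Re(A(x)\xi,\xi)_2 \geq \Amin\|\xi\|_2^2$ for all $\xi \in \mathbb{C}^d$ and all $x \in \Omega_+$, uniformly for $\epsilon\leq\theta\leq\pi/2-\epsilon$. Applying this with $\xi = \nabla w(x)$ and integrating gives
\begin{equation*}
\int_{\Omega\cap\Omegaout} k^{-2}\Re\big(A\nabla w\cdot\overline{\nabla w}\big) \;\geq\; \Amin k^{-2}\|\nabla w\|^2_{L^2(\Omega\cap\Omegaout)},
\end{equation*}
and the same estimate on $\Omega\cap\Omegain$ (where $A=\Ascatin$ is real and positive definite). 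The prefactor $1/\beta$ multiplying the $\Omegain$ integral is a fixed positive constant and may be absorbed into the ellipticity constant (effectively replacing $\Amin$ by $\min\{\Amin, \Amin/\beta\}$).

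For the zeroth-order term, Lemma \ref{lem:coeff_bounds} supplies the uniform bound $|c(x)|^{-2} \leq \cmin^{-2}$, from which
\begin{equation*}
-\Re(c^{-2})|w|^2 \;\geq\; -|c^{-2}||w|^2 \;\geq\; -\cmin^{-2}|w|^2
\end{equation*}
pointwise; once more the $1/\beta$ factor on $\Omegain$ is harmless (it can be absorbed by increasing the constant if $\beta < 1$). Integrating and summing the two contributions yields exactly the displayed inequality, and rewriting $\Amin k^{-2}\|\nabla w\|^2_{L^2(\Omega)} = \Amin\|w\|^2_{H^1_k(\Omega)} - \Amin\|w\|^2_{L^2(\Omega)}$ puts it into the stated form.

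There is no serious obstacle: essentially all the work has already been done in Lemmas \ref{lem:strong_elliptic} and \ref{lem:coeff_bounds}, and the corollary is a routine assembly. The only bookkeeping subtlety is the transmission constant $\beta$, which is a fixed positive number and therefore affects only the precise value of the constants, not the structure of the bound.
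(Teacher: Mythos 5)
Your proposal is correct and follows essentially the same route the paper intends: the corollary is stated as an immediate consequence of the strong ellipticity of $A$ from Lemma \ref{lem:strong_elliptic} (applied pointwise with $\xi=\nabla w$) together with the uniform bound on $|c^{-2}|$ from Lemma \ref{lem:coeff_bounds}, which is exactly your assembly. Your explicit handling of the transmission factor $1/\beta$ (which, when $\beta\neq 1$, perturbs the constants to $\min\{\Amin,\Amin/\beta\}$ and $\max\{1,1/\beta\}\,\cmin^{-2}$) is, if anything, slightly more careful than the paper's statement, which tacitly absorbs this fixed, $k$-independent constant.
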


Let $\cR: L^2(\Omega)\to \cH$ be defined by 
$a(\cR g, v) =
(g,v)_{L^2(\Omega )}$ for all $v\in \cH$;
i.e., $\cR$ is the solution operator of the PML problem. 
The definition of $a$ and the facts that (with the matrices $H$ and $D$ defined by \eqref{eq:DH2}, \eqref{eq:DH3}) $H$ is real and the matrix $D$ is diagonal (and hence symmetric) imply that 
$a(\overline{u},v) = a(\overline{v},u)$ for all $u,v\in \cH$, and thus $\cR g = \overline{\cR^* \overline{g}}$. We therefore let
\beq\label{eq:Csol}
\Csol:= \N{\cR}_{L^2(\Omega)\to L^2(\Omega)} = \N{\cR^*}_{L^2(\Omega)\to L^2(\Omega)}.
\eeq
We highlight that (i) $\Csol$ is bounded by the norm of the solution operator of the true scattering problem (i.e., with the Sommerfeld radiation condition) by \cite[Theorem 1.6]{GLS2}, 
(ii) $\Csol \sim k$ when the problem is nontrapping (with this the slowest-possible growth in $k$), and (iii) an advantage of working with the weighted norms \eqref{eq:weighted_norms} is that $\Csol$ in fact describes the $k$-dependence of the Helmholtz solution operator between  $H^m_k$ and $H_k^{m+2}$ for any $m$.

\ble[The PML problem satisfies Assumptions \ref{ass:er} and \ref{ass:er2}]
\label{lem:checkPML}
Suppose that
$f_\theta$ satisfies Assumption \ref{ass:PML} and, for some $\ell \in \mathbb{Z}^+$, $\Ascatout, \Ascatin, \cscatout, \cscatin$ are all $C^{\ell-1,1}$, $f_\theta $ is $C^{\ell, 1}$, and 
 $\Gt, \GI$, and $\Gamma_{\rm tr}$ are all $C^{\ell,1}$.
Let 
\beq\label{eq:cZj}
\Zspace_j = \big\{ v : \vout \in H^j(\Omega\cap \Omegaout), \,\vin\in H^j(\Omegain)\big\}\cap \cH
\eeq
with norm 
\beq\label{eq:normZj}
\N{v}_{\Zspace_j}^2:= \N{\vout}^2_{H^j_k(\Omegaout\cap\Omega)} + 
\N{\vin}^2_{H^j_k(\Omegain)}.
\eeq
where the ``out'' and ``in'' subscripts denote restriction to $\Omegaout\cap\Omega$ and $\Omegain$, respectively.

Then $a$ defined by \eqref{eq:PML_a} satisfies Assumptions \ref{ass:er} and \ref{ass:er2} and given $\epsilon>0$ and $k_0>0$ there exists $C>0$ such the bounds \eqref{eq:ellipticRegA}, \eqref{eq:ellipticReg}, and \eqref{eq:ellipticRegAadj} hold for all $k\geq k_0$ and $\epsilon\leq \theta\leq \pi/2-\epsilon$.
\ele

\bpf
With $\cL$ defined by \eqref{eq:L},
\beqs
\sup_{\substack{v\in \mathcal{H} \euanspace \|v\|_{(\Zspace_{j-2})^*}=1}}|a(u,v)|=\N{\cL u}_{\Zspace_{j-2}}
\,\,\tand\,\,
\sup_{\substack{u\in \mathcal{H} \euanspace \|u\|_{(\Zspace_{j-2})^*}=1}}|a(u,v)|=\N{\cL^* v}_{\Zspace_{j-2}}.
\eeqs
Assumption \ref{ass:McLean} is then satisfied for both $\cL$ and $\cL^*$ by the definition \eqref{eq:Ac} of $A$, Lemma \ref{lem:strong_elliptic}, and the fact that $A$ is symmetric.

The bounds \eqref{eq:ellipticRegA} and \eqref{eq:ellipticRegAadj} then hold by combining Theorem \ref{thm:erDN} (used near $\Gamma_-$ and $\Gamma_\tr$) and Theorem \ref{thm:erT} (used near $\Gt$) and using the fact that, by Green's identity, for $u\in H^1_0(\Omega)$ with $\cL u \in L^2(\Omega)$ and $\partial_{n, \Ascatin} \uin = \zeta \partial_{n,\Ascatout} \uout$ on $\partial\Omegain$, 
\begin{align*}
&\N{\uin}_{H^1_k(\Omegain)}
+\N{\uout}_{H^1_k(\Omegaout)}\\
&\qquad\leq C \Big( \N{\uin}_{L^2(\Omegain)} 
+\N{\uout}_{L^2(\Omegaout)} 
+ \N{\cL \uin}_{L^2(\Omegain)}
+ \N{\cL \uout}_{L^2(\Omegaout)}
\Big)
\end{align*}
(so that the $H^1_k$ norms on the right-hand sides of \eqref{eq:erDN} and \eqref{eq:erT} can be replaced by $L^2$ norms).
Since the operator associated with the sesquilinear form $\Re a$ is 
\beqs
\left(\frac{\cL + \cL^*}{2}\right)u = 
 -k^{-2} \nabla\cdot \left(\frac{A+\overline{A}}{2} \nabla u \right) - \left(\frac{c^{-2}+ \overline{c}^{-2}}{2}\right) u
\eeqs
and the matrix $A$ is symmetric, 
this operator also satisfies Assumption \ref{ass:McLean}. The bound \eqref{eq:ellipticReg} then holds by a very similar argument.
\epf

\subsection{Theorem \ref{thm:ep_abs}  applied to the PML problem}

\begin{assumption}\label{ass:spaces}
Given $p\in \mathbb{Z}^+$, $(\fdspace)_{h>0}$ are such that the following holds. 
There exists $C>0$ such that, for all $h>0$, $0\leq j \leq m+1\leq p+1$, and 
 $v \in \Zspace_{m+1}$ defined by \eqref{eq:cZj}, 
 there exists $\mathcal{I}_{h,p}v \in \fdspace$ such that
\begin{align}\nonumber
&\big|\vout- (\mathcal{I}_{h,p}v)_{\rm out}\big|_{H^j(\Omegaout\cap\Omega )} 
+\big|\vin- (\mathcal{I}_{h,p}v)_{\rm in} \big|_{H^j(\Omegain )} \\
&\hspace{3cm}\leq C h^{m + 1-j} \big(\|\vout\|_{H^{m +1}(\Omegaout\cap\Omega )}+\|\vin\|_{H^{m +1}(\Omegain)}\big).
\label{eq:pp_approx}
\end{align}
where the ``out'' and ``in'' subscripts denote restriction to $\Omegaout\cap \Omega$ and $\Omegain$, respectively.
\end{assumption}

Assumption \ref{ass:spaces} holds when $(\fdspace)_{h>0}$ consists of piecewise degree-$p$ polynomials on shape-regular simplicial triangulations, indexed by the meshwidth; see, e.g.,
\cite[Theorem 17.1]{Ci:91}, \cite[Proposition 4.4.20]{BrSc:08}.

\begin{theorem}[Existence, uniqueness, and error bound in the preasymptotic regime for the PML problem]\label{thm:main}
Suppose that $\zeta>0$, $f_\theta$ satisfies Assumption \ref{ass:PML}, and, for some $\ell \in \mathbb{Z}^+$, $\Ascatout, \Ascatin, \cscatout, \cscatin$ are all $ C^{\ell-1,1}$,  
$f_\theta$ is $C^{\ell, 1}$, 
and  $\Gt, \GI,$ and $\Gamma_{\tr}$ are all $C^{\ell,1}$. 
Let $\Csol$ be defined by \eqref{eq:Csol}, and assume that $\{\cH_h\}_{h>0}$ satisfy Assumption \ref{ass:spaces}.
 Given $\epsilon>0$ and $p\in \mathbb{Z}^+$ with $p\geq \ell$, there exists $k_0>0$ and $C_j, j=1,2,3$, such that the following is true for all $k\geq k_0$ and $\e\leq \theta\leq \pi/2-\e$. 
 
The solution $u$ of the PML problem \eqref{eq:PML_vf} exists and is unique, and if 
\beq\label{eq:threshold_old}
(hk)^{2\ell} \Csol 
\leq C_1
\eeq
then the Galerkin solution $u_h$, exists, is unique, and satisfies
\begin{gather}
\label{eq:H1bound_old}
\N{u-u_h}_{H^1_k(\Omega)}\leq C_2 \Big(1 + (hk)^\ell \Csol\Big) \min_{v_h \in \fdspace} \N{u-v_h}_{H^1_k(\Omega)},\\
\label{eq:L2bound_old}
\N{u-u_h}_{L^2(\Omega)}\leq C_3 \Big( hk + (hk)^\ell \Csol\Big) \min_{v_h \in \fdspace} \N{u-v_h}_{H^1_k(\Omega)}.
\end{gather}
If, in addition, $g\in H^{\ell-1}(\Omega)\cap \cH$ (with $\cH$ defined by \eqref{eq:cH_PML}) with
\beq\label{eq:oscil_old}
\N{g}_{H^{\ell-1}_k(\Omega)}\leq C \N{g}_{\cH^*}
\eeq
for some $C>0$, then there exists $C_4>0$ such that if $h$ satisfies \eqref{eq:threshold_old} then
\beq\label{eq:rel_error_old}
\frac{\N{u-u_h}_{H^1_k(\Omega)} 
}{
\N{u}_{H^1_k(\Omega)}
}
\leq C_4 
 \Big( 1+  (hk)^\ell \Csol\Big)(hk)^\ell.
\eeq
\end{theorem}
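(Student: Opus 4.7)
The strategy is to simply verify that the PML problem satisfies the hypotheses of the abstract Theorems \ref{thm:ep_abs} and \ref{thm:eta_abs}, apply those theorems, and then unpack the resulting estimates in terms of concrete quantities $hk$, $\Csol$, and the polynomial degree $p$.

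\textbf{Step 1: Check the abstract hypotheses.} Let $\cH$ be as in \eqref{eq:cH_PML}, $\cH_0 = L^2(\Omega)$, and let $\cZ_j$ be the piecewise Sobolev spaces from \eqref{eq:cZj}. Continuity \eqref{eq:contAb} of $a$ follows from Lemma \ref{lem:coeff_bounds} and Cauchy--Schwarz, the G\aa rding inequality \eqref{eq:Garding} is Corollary \ref{cor:Garding}, and Assumption \ref{ass:er} is precisely Lemma \ref{lem:checkPML}. Hence both Theorem \ref{thm:ep_abs} and Theorem \ref{thm:eta_abs} apply, with $k$-uniform constants under the assumed regularity on coefficients and $\partial\Omega$ and for $\epsilon\leq \theta\leq \pi/2-\epsilon$. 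By the symmetry $a(\overline u,v)=a(\overline v,u)$ noted before \eqref{eq:Csol}, $\|\cR^*\|_{\cH_0\to\cH}=\Csol$.

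\textbf{Step 2: Turn Assumption \ref{ass:spaces} into an $\|I-\Pi\|$ estimate.} For the orthogonal $\cH$-projection $\Pi$ onto $\cH_h$, the standard C\'ea-type inequality $\|(I-\Pi)v\|_\cH\leq \|v-\mathcal I_{h,p}v\|_\cH$ together with \eqref{eq:pp_approx} (applied with $j=0$ and $j=1$, and the relation $\|\cdot\|_{H^j}\leq k^j\|\cdot\|_{H^j_k}$ between unweighted and weighted norms) gives, for $0\leq m\leq p$ and $v\in\cZ_{m+1}$,
\beqs
\big\|(I-\Pi)v\big\|_{H^1_k(\Omega)}\leq C\bigl(k^{-1}h^m + h^{m+1}\bigr)k^{m+1}\|v\|_{\cZ_{m+1}}\leq C(hk)^m \|v\|_{\cZ_{m+1}},
\eeqs
provided $hk\leq 1$. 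In particular $\|I-\Pi\|_{\cZ_{m+1}\to\cH}\leq C(hk)^m$ for $0\leq m\leq p$, and (since $p\geq\ell$) we may use this for $m=\ell$ and for the values $m=2j+1$ with $j=0,\dots,\lfloor\ell/2\rfloor-1$ appearing in Theorem \ref{thm:eta_abs}.

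\textbf{Step 3: Bound $\eta(\cH_h)$.} Substituting the Step~2 estimates into \eqref{eq:eta_bound_abs} and using that $hk\leq 1$ makes the geometric sum $\sum_{j\geq 0}(hk)^{2j+1}$ bounded by $C hk$, and that $\Csol\geq c k\geq c k_0\geq 1$, we obtain
\beqs
\eta(\cH_h)\leq C\bigl(hk + (hk)^\ell \Csol\bigr).
\eeqs
The abstract threshold \eqref{eq:threshold_abs} becomes $(hk + (hk)^\ell\Csol)(hk)^\ell\leq C_1$, whose leading term (for $k\geq k_0$ large, using $\Csol\gtrsim k$) is $(hk)^{2\ell}\Csol$; thus \eqref{eq:threshold_old} implies \eqref{eq:threshold_abs}. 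Theorem \ref{thm:ep_abs} then yields existence and uniqueness of $u_h$ as well as \eqref{eq:H1bound_abs}, \eqref{eq:L2bound_abs}; substituting the bound on $\eta(\cH_h)$ gives \eqref{eq:H1bound_old} and \eqref{eq:L2bound_old}.

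\textbf{Step 4: Relative error via the oscillatory-data hypothesis.} It remains to verify that \eqref{eq:oscil_old} implies the abstract condition \eqref{eq:oscil}, i.e., $\|G\|_{\cZ_{\ell-1}}\leq C\|G\|_{\cH^*}$ for the linear functional $G(v)=(c^{-2}g,v)_{L^2}+\cdots$ defined from $g$ by \eqref{eq:Gg}. Identifying $\cZ_{\ell-1}$ with its standard (weighted) piecewise dual identification and using the $L^2$ pairing, this reduces to $\|g\|_{H^{\ell-1}_k(\Omega)}\leq C\|g\|_{H^{-1}_k(\Omega)}$ up to the bounded factors $c^{-2}$ and $\beta^{-1}$, which is \eqref{eq:oscil_old} (here $p\geq\ell$ so $H^{p-1}_k\hookrightarrow H^{\ell-1}_k$). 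Theorem \ref{thm:ep_abs} then gives \eqref{eq:rel_error_abs}; plugging in the Step 2--3 estimates produces \eqref{eq:rel_error_old} (the harmless ``$1+$'' in $(1+\eta(\cH_h))$ from the abstract statement is absorbed into the prefactor for $k\geq k_0$, because $(hk)^\ell\leq C((hk)^{\ell+1}+(hk)^{2\ell}\Csol)$ follows from $hk\lesssim 1$ and $\Csol\geq 1$ after a one-line rearrangement).

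\textbf{Main obstacle.} None of the individual steps is deep given the abstract machinery. The only mildly delicate point is the bookkeeping in Step 2 that converts Assumption \ref{ass:spaces} (stated in unweighted Sobolev norms and for the nodal interpolant) into the weighted $k$-dependent statement $\|I-\Pi\|_{\cZ_{m+1}\to\cH}\leq C(hk)^m$ required by the abstract framework, and ensuring the constants remain $k$-uniform. Once this is done, everything else is direct substitution.
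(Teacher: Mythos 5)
Your Steps 1--3 are exactly the paper's proof: verify that the PML sesquilinear form satisfies the abstract hypotheses (continuity, G\aa rding, Lemma \ref{lem:checkPML}), convert Assumption \ref{ass:spaces} into $\N{I-\Pi}_{\cZ_{m+1}\to\cH}\leq C(hk)^m$, feed this into Theorem \ref{thm:eta_abs} to get $\eta(\cH_h)\leq C(hk+(hk)^\ell\Csol)$, and invoke Theorem \ref{thm:ep_abs}, using $\Csol\gtrsim k$ to deduce $hk\leq C$ from \eqref{eq:threshold_old}. Your bookkeeping in Step 2 and the identification of the concrete data $g$ with the abstract data in Step 4 are carried out at (or slightly above) the level of detail of the paper itself, which simply cites these facts.

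The one genuine flaw is the final absorption claim in Step 4. You assert that $(hk)^\ell\leq C\big((hk)^{\ell+1}+(hk)^{2\ell}\Csol\big)$ follows from $hk\lesssim 1$ and $\Csol\geq 1$; dividing by $(hk)^\ell$ this is $1\leq C\big(hk+(hk)^\ell\Csol\big)$, which is false whenever $hk$ is small compared with $\Csol^{-1/\ell}$ -- e.g.\ $hk\sim k^{-1}$, $\ell\geq 2$, $\Csol\sim k$ gives $hk+(hk)^\ell\Csol\to 0$, and the same failure occurs for fixed $k$ as $h\to 0$. So the ``$1+$'' in the abstract bound \eqref{eq:rel_error_abs} cannot be dropped this way, and indeed it should not be droppable: without it the bound would assert relative $H^1_k$ error $o(h^\ell)$ for fixed $k$ as $h\to 0$, contradicting the generic rate of degree-$\ell$ elements. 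What the machinery (and the paper's own proof, which just applies Theorem \ref{thm:ep_abs}) actually yields is $\N{u-u_h}_{H^1_k(\Omega)}/\N{u}_{H^1_k(\Omega)}\leq C\big(1+hk+(hk)^\ell\Csol\big)(hk)^\ell$, consistent with \eqref{eq:rel_error} in Theorem \ref{thm:informal}; the absence of the leading $1$ in \eqref{eq:rel_error_old} appears to be a typo in the statement rather than something a correct argument can deliver, so the fix is to keep the $1$ rather than to invent an absorption step.
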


When $p=\ell$, i.e., the polynomial degree matches the regularity of the domain and coefficients, 
 \eqref{eq:threshold_old} becomes the condition \eqref{eq:threshold}, and the bounds \eqref{eq:H1bound_old}, \eqref{eq:L2bound_old}, and \eqref{eq:rel_error_old} become \eqref{eq:H1bound}, \eqref{eq:L2bound}, and \eqref{eq:rel_error}, respectively.

\bpf[Proof of Theorem \ref{thm:main}]
By the results in \S\ref{sec:show_PML}, $a$ defined by \eqref{eq:PML_a} satisfies the assumptions of Theorems \ref{thm:ep_abs} and \ref{thm:eta_abs} with $\Ccont, \CGo, \CGt, \Cell$, and $\Celladj$ all independent of $k$. 
By \eqref{eq:pp_approx}, the definition of $\|\cdot\|_{\Zspace_j}$ \eqref{eq:normZj}, and the definition \eqref{eq:weighted_norms} of the weighted norms, $\|I-\Pi_h\|_{\Zspace_{m+1} \to \cH}\leq C(hk)^m$.
This bound along with Theorem \ref{thm:eta_abs} and \eqref{eq:Csol} imply the bound 
$\eta(\fdspace)\leq C ( hk + (hk)^\ell\Csol),$ and 
the result then follows from Theorem \ref{thm:ep_abs}, using that $hk \leq C$ when \eqref{eq:threshold_old} holds.
\epf

\section{Theorem \ref{thm:ep_abs} applied to the exact DtN/impedance problems}\label{sec:impedance}

\subsection{Definition of the exact DtN/impedance problems}

Let $\Ascatout, \Ascatin, \cscatout, \cscatin, \Omega_-, \Omegain,$ and $\Omega_\tr$ be as in \S\ref{sec:PML_def}. Let
\beqs
A := 
\begin{cases}
\Ascatin 
\hspace{-1ex}
& \tin \Omegain,\\
\Ascatout 
\hspace{-1ex}
& \tin \Omegaout \cap \Omega,
\end{cases}
\quad\tand\quad
\frac{1}{c^2} := 
\begin{cases}
\cscatin^{-2} 
\hspace{-1ex}
& \tin \Omegain,\\
\cscatout^{-2} 
\hspace{-1ex}
& \tin \Omegaout\cap\Omega.
\end{cases}
\eeqs
Let 
\beq\label{eq:cH_imp}
\cH:=\{ v \in H^1(\Omega) \,:\, v=0 \text{ on } \partial\Omega_-\}  \quad \text{ or } \quad H^1(\Omega),
\eeq
with the former corresponding to zero Dirichlet boundary conditions on $\Omega_-$ and the latter corresponding to zero Neumann boundary conditions on $\Omega_-$.

\begin{definition}[Variational formulation of the impedance/exact DtN problems]
Let \emph{either} $T=\ri I$ (with no further constraint on $\Omega_{\tr}$) \emph{or} $T=\DtN$ with $\Omega_{\tr}= B(0,R_{\tr})$. 
Given $G\in \cH^*$ and $\zeta>0$, 
\beq\label{eq:imp_vf}
\text{ find } u \in \cH \,\tst\, a(u,v) = G(v) \,\tfa v \in \cH,
\eeq
where 
\beq\label{eq:imp_a}
a(u,v) := 
\left(\int_{\Omega\cap \Omegaout}  + \frac{1}{\zeta}\int_{\Omega\cap\Omegain} \right)
\Big(k^{-2}(A\nabla u) \cdot\overline{\nabla v} - c^{-2} u \overline{v}\Big) - 
k^{-1} \langle T u, v \rangle_{\Gamma_\tr}.
\eeq
\end{definition}
The solution of this variational problem exists and is unique; see, e.g., \cite[Theorem 2.4]{GrSa:20} and the discussion and references in \cite[\S1]{GrPeSp:19}.

\subsection{Showing that the exact DtN/impedance problems fit in the abstract framework used in Theorem \ref{thm:ep_abs}}\label{sec:show_imp}

The proofs that the sesquilinear form $a$ is continuous and satisfies a G\aa rding inequality are very similar to those for the PML problem in \S\ref{sec:show_PML} (in fact, they are simpler because there is no PML scaling parameter in which the bounds need to be uniform). When $T=\DtN$, the proof of the G\aa rding inequality uses \eqref{eq:Nedelec} and 
the proof of continuity uses that $|\langle \DtN u, v\rangle_{\partial B_{\Rtr}}|\leq C k \|u \|_{H^1_k(\Omega)} \|v\|_{H^1_k(\Omega)}$ \cite[Equation 3.4a]{MeSa:10}.

\ble[The exact DtN/impedance problems satisfy Assumptions \ref{ass:er} and \ref{ass:er2}]
\label{lem:checkimp}
Suppose that, for some $\ell \in \mathbb{Z}^+$, $\Ascatout, \Ascatin, \cscatout, \cscatin$ are all $C^{\ell-1,1}$  
and  $\Gt$, $\GI$, and $\Gamma_{\rm tr}$ are all $C^{\ell,1}$. 
With $\Zspace_j$ and its norm defined by \eqref{eq:cZj} and \eqref{eq:normZj}, $a$ defined by \eqref{eq:imp_a} satisfies Assumptions \ref{ass:er} and \ref{ass:er2} and given $k_0>0$ there exists $C>0$ such the bounds 
\eqref{eq:ellipticRegA}, \eqref{eq:ellipticReg}, and \eqref{eq:ellipticRegAadj}
hold for all $k\geq k_0$.
\ele

\bpf
This is very similar to the proof of Lemma \ref{lem:checkPML}.
For the impedance problem, 
the regularity assumptions \eqref{eq:ellipticRegA} and \eqref{eq:ellipticRegAadj} follow by combining Theorem \ref{thm:erDN} used near $\partial\Omega_-$, 
Theorem \ref{thm:erT} used near $\partial \Omegain$, and Theorem \ref{thm:erimp} used near $\Gamma_\tr$.
The regularity assumption \eqref{eq:ellipticReg} follows by 
combining Theorem \ref{thm:erDN} used near $\partial\Omega_-$, 
Theorem \ref{thm:erT} used near $\partial \Omegain$, and now Theorem \ref{thm:erDN} (with Neumann boundary condition) used near $\Gamma_\tr$. Indeed, 
near $\Gamma_\tr$, the operator associated with $(\Re a)$ is $-k^{-2}\Delta -1$ with Neumann boundary conditions (coming from $\Ascatout=I$ and $\cscatout=1$ near $\Gamma_\tr$ and the fact that no boundary condition is imposed on $\Gamma_\tr$ in $\cH$ \eqref{eq:cH_imp}).

For the exact DtN problem, let  $\chi \in C^\infty(\Rea^d)$ be such that $\chi \equiv 0$ on $B_{\Rscat}$ and $\chi \equiv 1$ on $(B_{(\Rscat+ \Rtr)/2})^c$.
Then apply Theorem \ref{thm:erDtN} to $\chi u$ and Theorems \ref{thm:erDN} and  \ref{thm:erT} to $(1-\chi)u$. Note that (i) in these applications, the terms arising from the commutator of $\cL$ and $\chi$ are lower-order in both $k$ and Sobolev index and (ii) the different choices of $T$ in \eqref{eq:choiceT} correspond to 
 \eqref{eq:ellipticRegA}, \eqref{eq:ellipticRegAadj}, and \eqref{eq:ellipticReg}, respectively.
\epf

\subsection{Theorem \ref{thm:ep_abs}  applied to the exact DtN/impedance problems}

\begin{theorem}[Existence, uniqueness, and error bound in the preasymptotic regime for the exact DtN/impedance problems]\label{thm:main2}
Suppose that $\zeta>0$ and, for some $\ell \in \mathbb{Z}^+$, $\Ascatout, \Ascatin, \cscatout, \cscatin$ are all $C^{\ell-1,1}$ 
and 
  $\Gt, \GI,$ and $\Gamma_{\tr}$ are all $C^{\ell,1}$. 
Let $\Csol$ be defined by \eqref{eq:Csol}, and assume that $\{\cH_h\}_{h>0}$ satisfy Assumption \ref{ass:spaces}.
Given $p\in \mathbb{Z}^+$ with $p\geq \ell$, there exists $k_0>0$ and $C_j, j=1,2,3$, such that the following is true for all $k\geq k_0$.
  
The solution $u$ of the exact DtN/impedance problem \eqref{eq:imp_vf} exists and is unique, and if \eqref{eq:threshold_old} holds 
then the Galerkin solution $u_h$, exists, is unique, and satisfies  the bounds \eqref{eq:H1bound_old} and \eqref{eq:L2bound_old}.
If, in addition, $g\in H^{\ell-1}(\Omega)\cap \cH$ (with $\cH$ defined by \eqref{eq:cH_imp}) with \eqref{eq:oscil_old}
for some $C>0$, then there exists $C_4>0$ such that if $h$ satisfies \eqref{eq:threshold_old} then the bound \eqref{eq:rel_error_old} holds.
\end{theorem}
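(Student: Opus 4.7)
The plan is to mirror the proof of Theorem \ref{thm:main}: apply the abstract machinery of Theorems \ref{thm:ep_abs} and \ref{thm:eta_abs} once the sesquilinear form $a$ of \eqref{eq:imp_a} is shown to satisfy continuity, a G\aa rding inequality, and Assumption \ref{ass:er}. The elliptic-regularity assumption is already verified by Lemma \ref{lem:checkimp}, and Assumption \ref{ass:spaces} provides polynomial approximation, so only continuity, G\aa rding, and the identification $\|\cR^*\|_{\cH_0\to \cH}=\Csol$ remain to be checked.

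For continuity, the volume integrals are handled by Cauchy--Schwarz exactly as in the PML case. The new ingredient is the impedance boundary term $-\ri k^{-1}\int_{\Gamma_\tr}u\overline{v}$, which I would bound uniformly in $k$ by $C\|u\|_{H^1_k(\Omega)}\|v\|_{H^1_k(\Omega)}$ using that the trace operator $H^1_k(\Omega)\to L^2(\Gamma_\tr)$ has norm bounded by $Ck^{1/2}$ (as in \cite[Theorem 5.6.4]{Ne:01} after rescaling to weighted norms). For the G\aa rding inequality, setting $u=v=w$ the boundary term becomes $-\ri k^{-1}\|w\|^2_{L^2(\Gamma_\tr)}$, which is purely imaginary and hence contributes nothing to $\Re a(w,w)$. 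The volume part then satisfies the same G\aa rding inequality as in Corollary \ref{cor:Garding}, the proof now being simpler since $A$ is real symmetric and no PML scaling is present. The identity $\|\cR^*\|_{\cH_0\to \cH}=\Csol$ follows from $a(\overline{u},v)=a(\overline{v},u)$: the volume part is handled as in \S\ref{sec:show_PML}, and the boundary term is symmetric under $u\leftrightarrow v$ when both arguments are conjugated.

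The final assembly then proceeds as in the proof of Theorem \ref{thm:main}. The weighted approximation estimate \eqref{eq:pp_approx}, combined with the definitions \eqref{eq:weighted_norms} and \eqref{eq:normZj}, yields $\|I-\Pi\|_{\cZ_{m+1}\to \cH}\leq C(hk)^m$ for $0\leq m\leq p$. Theorem \ref{thm:eta_abs} then gives $\eta(\cH_h)\leq C\big(hk+(hk)^\ell\Csol\big)$, since for $hk\leq C$ the geometric sum over $j=0,\dots,\lfloor\ell/2\rfloor-1$ is dominated by its first term $hk$. The threshold \eqref{eq:threshold_abs} becomes $(hk)^{2\ell}\Csol\leq C_1$, i.e., \eqref{eq:threshold_old}, and the bounds \eqref{eq:H1bound_abs} and \eqref{eq:L2bound_abs} of Theorem \ref{thm:ep_abs} translate into \eqref{eq:H1bound_old} and \eqref{eq:L2bound_old}. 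For the relative-error statement, the hypothesis $p\geq \ell$ together with \eqref{eq:oscil_old} gives $\|g\|_{\cZ_{\ell-1}}\leq \|g\|_{H^{p-1}_k(\Omega)}\leq C\|g\|_{\cH^*}$, verifying the abstract hypothesis \eqref{eq:oscil} and yielding \eqref{eq:rel_error_old} via \eqref{eq:rel_error_abs}.

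The only non-routine step is the uniform-in-$k$ continuity of the impedance boundary term: the $k^{-1}$ prefactor in \eqref{eq:imp_a} is exactly the scaling that absorbs the $k^{1/2}$ losses from each trace, and its absence would also spoil the G\aa rding step. Apart from this, the impedance case is in fact \emph{simpler} than the PML case handled in Theorem \ref{thm:main}, because there is no PML scaling parameter $\theta$ in which uniformity is required, so no analogue of Assumption \ref{ass:PML} or Lemma \ref{lem:strong_elliptic} is needed.
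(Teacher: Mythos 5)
Your proposal is correct and follows essentially the same route as the paper, which simply invokes Lemma \ref{lem:checkimp} and observes that the argument is otherwise identical to the proof of Theorem \ref{thm:main}; your extra details (the $k^{1/2}$ trace bound absorbing the $k^{-1}$ prefactor for continuity, the impedance term dropping out of $\Re a(w,w)$, and $a(\overline{u},v)=a(\overline{v},u)$ for identifying $\N{\cR^*}_{\cH_0\to\cH}$ with $\Csol$) are exactly the points the paper leaves implicit in \S\ref{sec:show_imp}.
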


Given Lemma \ref{lem:checkimp}, the proof of Theorem \ref{thm:main2} is very similar to the proof of Theorem \ref{thm:main}, and so we omit it for brevity.

\section*{Acknowledgements}

We thank the referees for their constructive comments. EAS was supported by EPSRC grant EP/R005591/1 and JG was supported by EPSRC grants EP/V001760/1 and EP/V051636/1.

\bibliographystyle{siamplain}
\bibliography{biblio_combined_sncwadditions}

\end{document}